\numberwithin{equation}{section}
\newtheorem{theorem}{Theorem}
\DeclareMathOperator*{\argmax}{argmax}
\newcommand{\expit}{\textrm{expit}}
\newcolumntype{x}[1]{%
>{\centering\arraybackslash}p{#1}}%
\title{Toward computerized efficient estimation in infinite-dimensional models}
\author[1]{Marco Carone}
\author[2]{Alexander R. Luedtke}
\author[3]{Mark J. van der Laan}
\affil[1]{Department of Biostatistics, University of Washington}
\affil[2]{Vaccine and Infectious Disease Division, Fred Hutchinson Cancer Research Center}
\affil[3]{Division of Biostatistics, University of California, Berkeley}
\begin{document}
\maketitle

\bibliographystyle{authordate1}

\singlespacing

\begin{abstract}
Despite the risk of misspecification they are tied to, parametric models continue to be used in statistical practice because they are accessible to all. In particular, efficient estimation procedures in parametric models are simple to describe and implement. Unfortunately, the same cannot be said of semiparametric and nonparametric models. While the latter often reflect the level of available scientific knowledge more appropriately, performing efficient inference in these models is generally challenging. The efficient influence function is a key analytic object from which the construction of asymptotically efficient estimators can potentially be streamlined. However, the theoretical derivation of the efficient influence function requires specialized knowledge and is often a difficult task, even for experts. In this paper, we propose and discuss a numerical procedure for approximating the efficient influence function. The approach generalizes the simple nonparametric procedures described recently by \cite{frangakis2015biometrics} and \cite{luedtke2015biometrics} to arbitrary models. We present theoretical results to support our proposal, and also illustrate the method in the context of two examples. The proposed approach is an important step toward automating efficient estimation in general statistical models, thereby rendering the use of realistic models in statistical analyses much more accessible.
\end{abstract}

\begin{center}{\small \textbf{Keywords:} asymptotic efficiency, canonical gradient, efficient influence function, infinite-dimensional models, pathwise differentiability.}\end{center}
\doublespacing

\newpage
\section{Introduction}\label{intro}

Efficient estimation techniques are often preferred because they maximally exploit available information and minimize the uncertainty of the resulting scientific findings. Efficiency is most broadly defined in an asymptotic sense. As such, characterizing asymptotic efficiency and constructing asymptotically efficient estimators has been an important focus of methodological and theoretical research in statistics. For convenience, throughout this paper, we ascribe an asymptotic sense to the terms \emph{efficient} and \emph{efficiency}.

In the context of parametric models, a simple efficiency theory has been available for nearly a century, largely established in Fisher's work on maximum likelihood estimation. In such models, efficiency is characterized by the Cramer-Rao bounds and efficient estimators can generally be obtained via maximum likelihood (see, e.g., \citealp{hajek1970,hajek1972,lecam1972}). When parametric models are adopted in practice, it is often because they are simple and convenient to use. However, the use of such models carries the potential for model misspecification, which may have potentially serious adverse effects on the scientific process. In many scientific problems, the available background knowledge simply does not justify the use of such restrictive statistical models.

Infinite-dimensional models -- either nonparametric or semiparametric -- offer a more flexible alternative. These richer models mitigate the risk of model misspecification and more accurately reflect the level of available prior knowledge. Unfortunately, establishing efficiency bounds for target parameters in infinite-dimensional models can be a very complex task. The development of a general efficiency theory, valid for arbitrary statistical models, is a more recent accomplishment: except for the early seminal contribution of \cite{stein1956}, developments in this area began in the late 1970s and early 1980s with the works of \cite{koshevnik1977theory}, \cite{pfanzagl1982} and \cite{begun1983annals}, among others, and continued throughout the 1990s (see, e.g., \citealp{vandervaart1991annals}; \citealp{newey1994econometrica}). Notably, it builds upon notions of differential geometry and functional analysis. In certain cases, a generalized notion of maximum likelihood, as described, for example, by \cite{kiefer1956annals}, can still be used to produce efficient estimators. In other cases though, the statistical model is too complex for a maximum likelihood estimator to exist, let alone be well-behaved. This renders the pursuit of efficient estimators a substantially more difficult task in infinite-dimensional models.

A key object in this general efficiency theory is the efficient influence function, hereafter referred to as EIF. It bears this name because it is the influence function of any efficient estimator of the parameter of interest given a particular statistical model. If the EIF is known, efficiency bounds can easily be estimated, at least theoretically, and the performance of candidate estimators can be examined against an objective benchmark. Valid confidence intervals based on a given efficient estimator can also be constructed using the EIF. This is particularly useful in settings where the bootstrap is known to fail. More importantly, if the analytic form of the EIF is available, efficient estimators can be constructed rather easily. To do so, several approaches may be used, including, for example, gradient-based estimating equations (e.g., \citealp{vanderlaan2003}), Newton-Raphson one-step corrections (e.g., \citealp{pfanzagl1982}) and targeted minimum loss-based estimation (e.g.,  \citealp{vanderlaan2011}). This provides a strong motivation for deriving the EIF in a given statistical problem. Unfortunately, the analytic computation of the EIF is seldom straightforward. It generally involves finding an influence function, characterizing the tangent space of the statistical model and projecting onto it --  the effort can be mathematically intricate. Over the years, many techniques have been developed to facilitate this task in certain classes of problems -- the discretization technique of \cite{chamberlain1987jeconometrics} is one such example. Despite this, this calculation remains a rather specialized skill, mastered mostly by a small collection of theoretically-inclined researchers. The theoretical derivation of EIFs is generally not in the skill set of practicing statisticians. Yet, in many problems, it is a necessary skill to master in order to make optimal inference in more realistic statistical models. The paucity of this skill has likely constituted an impediment to a broader appreciation and adoption of semiparametric and nonparametric techniques in applications.

In view of this barrier, one naturally wonders whether a suitable numerical approximation could serve as substitute for the analytic form of the EIF, and further whether its calculation could be computerized. An affirmative answer to this question would render the implementation of efficient inferential techniques in semiparametric and nonparametric models much more accessible to practitioners,  and the impact on current statistical practice could be profound. Recently, a very important first step toward this goal was made by \cite{frangakis2015biometrics}: these authors proposed a simple numerical routine for calculating the EIF in the context of nonparametric models when the data are discrete-valued or when the parameter is a smooth functional of the distribution function. In  our discussion of their article (see \citealp{luedtke2015biometrics}), we suggested a regularization of their technique that is valid more broadly within the context of nonparametric models. Nevertheless, neither of these methods formally address the more difficult problem of computerizing the calculation of the EIF in semiparametric models. As opposed to nonparametric models, for which the tangent space is trivially described, semiparametric models generally have much more complex tangent spaces, projecting onto which may often require great skill. Identifying a numerical approach for computing the EIF in semiparametric models is therefore a more difficult but also more needed innovation. In this article, we establish and study novel representations of the EIF that naturally allows a numerical computation of the EIF of a given parameter in a given statistical model. Importantly, we do not impose constraints on the type of model that may be considered. These representations hold great promise in allowing true computerization, as we discuss below.

This paper is organized as follows. In Section 2, we present novel representations of the EIF for use in arbitrary statistical models and show how they may be used to calculate the EIF numerically. In Section 3, we establish sufficient technical conditions that guarantee the validity of these representations. We discuss various practical issues regarding the implementation of our proposal in Section 4. In Section 5, we illustrate the validity and feasibility of the approach in the context of two examples. Finally, we provide concluding remarks in Section 6. While Theorem 1 is proved in the body of the paper, the proof of Theorems 2, 3 and 4 are provided in an Appendix.

\section{Numerical calculation of the efficient influence function}\label{method}

\subsection{Preliminaries}

Suppose that we observe independent $d$-dimensional variates $X_1,X_2,\ldots,X_n$ following a distribution $P_0$ known only to belong to the statistical model $\mathscr{M}$. We denote by $\mathscr{X}(P)\subseteq\mathbb{R}^d$ the sample space associated to $P\in\mathscr{M}$. We are interested in efficiently inferring about $\psi_0:=\Psi(P_0)$ using the available data, where $\Psi:\mathscr{M}\rightarrow\mathbb{R}^q$ represents a pathwise differentiable parameter mapping of interest. Pathwise differentiability ensures the parameter is a sufficiently smooth mapping so as to admit an efficiency theory (see, e.g., \citealp{pfanzagl1982,bkrw1997}). We denote by $L_2^0(P)$ the Hilbert space of $P$-integrable functions from $\mathscr{X}(P)$ to $\mathbb{R}^q$ with mean zero and finite variance under $P$. The parameter $\Psi$ is said to be pathwise differentiable if there exists some $\chi_P\in L_2^0(P)$ such that, for each regular one-dimensional parametric submodel $\mathscr{M}_0:=\{P_\epsilon:\epsilon\in\mathscr{E}\}\subseteq \mathscr{M}$ with $\mathscr{E}\subset\mathbb{R}$ an interval containing zero and $P_{\epsilon=0}=P$, the pathwise derivative $\left.\frac{d}{d\epsilon}\Psi(P_\epsilon)\right|_{\epsilon=0}$ can be represented as the inner product $\int \chi_P(u)s(u)dP(u)$, where $s$ is the score for $\epsilon$ at $\epsilon=0$ in $\mathscr{M}_0$ \citep{pfanzagl1982}. Any such element $\chi_P$ is said to be a gradient of $\Psi$ at $P$ relative to $\mathscr{M}$. The tangent space $T_\mathscr{M}(P)$ of $\mathscr{M}$ at $P$ is defined as the closure of the linear span of scores at $P$ arising from  regular one-dimensional parametric submodels of $\mathscr{M}$ through $P$. The canonical gradient is the unique gradient contained in $T_\mathscr{M}(P)$ and corresponds to the EIF under sampling from $P$. Throughout, we will refer to the EIF at $P$ as $\phi_P$ and write $\phi_P(x)$ for the evaluation of $\phi_P$ at the observation value $x$. The asymptotic variance of an efficient estimator of $\psi_0$ relative to model $\mathscr{M}$ is given by $\int \phi_{P_0}(u) \phi_{P_0}(u)^\top dP_0(u)$. Without loss of generality, we will assume $q=1$ since the general case can be trivially dealt with using the developments herein applied to each component.

If pathwise differentiability holds uniformly over paths in a neighborhood around $P$, for any $P_1\in\mathscr{M}$ close enough to $P$,  the parameter admits the linearization \begin{align}
\Psi(P_1)-\Psi(P)\ &=\ \int \phi_{P_1}(u)d(P_1-P)(u)+R(P_1,P)\notag\\
&=\ -\int\phi_{P_1}(u)dP(u)+R(P_1,P) \label{linear}
\end{align} where $R(P_1,P)$ is a second-order remainder term, and the second line follows from the first since $\int \phi_{P_1}(u)dP_1(u)=0$ in view of the fact that the EIF is centered. This representation, which is no more than a first-order Taylor approximation over the model space, holds for most smooth parameters arising in practice. The precise form of $R$ is generally established by hand on a case-by-case basis. This linearization is critical for motivating and studying the use of both Newton-Raphson one-step correction and targeted minimum loss-based estimation to construct efficient estimators. It is also at the heart of our current proposal for obtaining a numerical approximation to the EIF value $\phi_{P}(x)$ at a given distribution $P\in\mathscr{M}$ and observation value $x\in\mathscr{X}(P)$.

\subsection{Nonparametric models}

Recently, \cite{frangakis2015biometrics} presented one such proposal based on the representation of $\phi_{P}(x)$ as the G\^ateaux derivative of $\Psi$ at $P$ in the direction of $\delta_{x}-P$, where $\delta_{x}$ represents the degenerate distribution at $x$. Of course, this can also be seen as the pathwise derivative $\left.\frac{d}{d\epsilon}\Psi(P_{\epsilon})\right|_{\epsilon=0}$ of $\Psi$ at $P$ along the linear perturbation path $\{P_{\epsilon}:=(1-\epsilon)P+\epsilon \delta_{x}:0\leq \epsilon\leq 1\}$ between $P$ and $\delta_x$ -- this simple observation will be helpful when dealing with arbitrary models. Here and throughout, any such derivative is of course interpreted as a right derivative. To computerize the process of calculating $\phi_P(x)$, these authors suggested approximating this derivative by the slope of the secant line connecting $(0,\Psi(P))$ and $(\epsilon,\Psi(P_\epsilon))$ for a very small $\epsilon>0$. In our discussion of \cite{frangakis2015biometrics} (see \citealp{luedtke2015biometrics}), we pointed out sufficient conditions that guarantee that this indeed approximates $\phi_P(x)$. For example, this approach is valid whenever the model $\mathscr{M}$ is nonparametric and the sample space $\mathscr{X}(P)$ is finite. However, if the parameter $\Psi$ depends on local features of the distribution, this method may fail when $\mathscr{X}(P)$ is infinite, such as when any component of $X$ is continuous under $P$. We proposed a slight modification of the procedure of \cite{frangakis2015biometrics} to remedy this limitation. Specifically, we proposed replacing the degenerate distribution $\delta_x$ at $x$ by a distribution $H_{x,\lambda}$ symmetric about $x$, dominated by $P$ and such that $\int g(u)dH_{x,\lambda}(u)\rightarrow g(x)$ as $\lambda\rightarrow 0$ for all $g$ in a sufficiently large class of functions. This amounts to replacing the degenerate distribution by a nearly degenerate distribution with smoothing parameter $\lambda>0$. In a technical report published contemporaneously, \cite{ichimura2015arxiv} also suggested this approach. As stated in \cite{luedtke2015biometrics}, under certain regularity conditions and provided $\mathscr{M}$ is nonparametric, it is generally the case that \begin{align}
\phi_{P}(x)\ &=\ \lim_{\lambda\rightarrow 0}\frac{d}{d\epsilon}\Psi(P_{\epsilon,\lambda})\label{npder}\\
&=\ \lim_{\lambda\rightarrow 0}\lim_{\epsilon\rightarrow 0}\frac{\Psi(P_{\epsilon,\lambda})-\Psi(P)}{\epsilon}\label{npsec}\ ,
\end{align} where we have defined the linear perturbation path $P_{\epsilon,\lambda}:=(1-\epsilon)P+\epsilon H_{x,\lambda}$. Representation \eqref{npder} is useful when the parameter is simple enough so that calculating the derivative of $\epsilon\mapsto \Psi(P_{\epsilon,\lambda})$ is analytically convenient. Otherwise, representation \eqref{npsec} can be used to circumvent this analytic step by approximating this derivative by the slope of a secant line, as in \cite{frangakis2015biometrics}. Because these representations constitute a special case of the general result described in the next subsection, we defer a statement of regularity conditions and a formal proof until then.

In practice, to approximate $\phi_P(x)$ numerically, the secant line slope exhibited in \eqref{npsec} is evaluated for small $\epsilon$ and $\lambda$. This operation only requires the ability to evaluate $\Psi$ on a given distribution. Generally, as we highlighted in \cite{luedtke2015biometrics}, $\epsilon$ must be chosen much smaller than $\lambda$ to obtain an accurate approximation -- this emphasizes that the order of the limits in \eqref{npsec} plays an important role in the implementation of this procedure. We discuss this point in greater detail later.

\subsection{Arbitrary models}\label{gen:std}

When the model is not nonparametric, the representations provided in \eqref{npder} and \eqref{npsec} generally do not hold. Except for when $\epsilon=0$, the linear perturbation path described by $P_{\epsilon,\lambda}$ is usually not contained in the model. Therefore, the parameter may not even be defined on this path. Even if it is, in general, the approximation suggested by these representations will at best yield the EIF of $\Psi$ relative to a nonparametric model rather than the actual model. While the EIF in a nonparametric model is still an influence function in $\mathscr{M}$, it is not typically efficient. This is also clear from a practical perspective: since the expressions in \eqref{npder} and $\eqref{npsec}$ do not acknowledge constraints implied by $\mathscr{M}$, they could not possibly yield the actual EIF.

Since the path $\{P_{\epsilon,\lambda}:0\leq \epsilon\leq 1\}$ is generally not in $\mathscr{M}$ for $\epsilon\neq 0$, it appears natural to consider the behavior of $\Psi$ along the analogue of the linear perturbation path in $\mathscr{M}$. To formalize this idea, we may consider the path $P^*_{\epsilon,\lambda}$ obtained by projecting $P_{\epsilon,\lambda}$ according to the Kullback-Leibler divergence into  $\mathscr{M}$ or a suitably regularized version thereof. We formally define \begin{equation}\label{proj}
P^*_{\epsilon,\lambda}:=\argmax_{P_1\in\mathscr{M}(P)}\int \log\left\{\frac{dP_1}{d\nu}(x)\right\} dP_{\epsilon,\lambda}(x)\ ,
\end{equation} where $\mathscr{M}(P):=\{P_1\in\mathscr{M}:P_1\ll P\}\subseteq \mathscr{M}$ is the subset of all probability measures in $\mathscr{M}$ that are absolutely continuous with respect to $P$, $\nu$ is a measure dominating $P$, and for any $P_1\in\mathscr{M}(P)$, $dP_1/d\nu$ is the density of $P_1$ relative to $\nu$. This projection defines a novel path in the model space. Under regularity conditions, we can then establish that  \eqref{npder} and \eqref{npsec} hold more broadly when the linear perturbation path is replaced by the model-specific path defined by this projection, as is formalized below. Here and throughout, we use the shorthand notation $\phi^*_{\epsilon,\lambda}$ to denote $\phi_{P^*_{\epsilon,\lambda}}$.

\begin{theorem}\label{representation}
Suppose that $P^*_{\epsilon,\lambda}$ exists and is in $\mathscr{M}$ for all sufficiently small $\epsilon$ and $\lambda$. Then, provided  \begin{enumerate}[\hspace{.5in}]\vspace{-.05in}
\item[\quad(A1)] (solution of EIF estimating equation) $\int \phi^*_{\epsilon,\lambda}(u)dP_{\epsilon,\lambda}(u)=0$;\vspace{-.12in}
\item[\ \ (A2)] (continuity of EIF) $\lim_{\lambda\rightarrow 0}\lim_{\epsilon\rightarrow 0}\int \phi^*_{\epsilon,\lambda}(u)d(H_{x,\lambda}-P)(u)=\phi_P(x)$;\vspace{-.12in}
\item[\ \ (A3)] (preservation of rate of convergence) $\lim_{\lambda\rightarrow 0}\lim_{\epsilon\rightarrow 0}R(P^*_{\epsilon,\lambda},P)/\epsilon=0$;
\end{enumerate} the EIF of $\Psi$ relative to $\mathscr{M}$ at $P\in\mathscr{M}$ evaluated at observation value $x$ is given by  \begin{align}
\phi_P(x)\ &=\ \lim_{\lambda\rightarrow 0}\frac{d}{d\epsilon}\Psi(P^*_{\epsilon,\lambda})\label{gender}\\
&=\ \lim_{\lambda\rightarrow 0}\lim_{\epsilon\rightarrow 0}\frac{\Psi(P^*_{\epsilon,\lambda})-\Psi(P)}{\epsilon}\ .\label{gensec}
\end{align}
\end{theorem}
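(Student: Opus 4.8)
The plan is to substitute the projected path $P^*_{\epsilon,\lambda}$ into the linearization \eqref{linear} and then strip away, in sequence, the scaling factor, the remainder, and the local behavior of the EIF, using (A1), (A3), and (A2) respectively. I would first observe that it suffices to prove \eqref{gensec}, since \eqref{gender} is merely a compact rewriting of the inner $\epsilon$-limit: the Kullback--Leibler projection \eqref{proj} of $P$ onto $\mathscr{M}(P)$ is $P$ itself (cross-entropy is maximized at the truth), so $P^*_{0,\lambda}=P$ and hence the right derivative $\left.\frac{d}{d\epsilon}\Psi(P^*_{\epsilon,\lambda})\right|_{\epsilon=0}$ equals $\lim_{\epsilon\rightarrow 0}\{\Psi(P^*_{\epsilon,\lambda})-\Psi(P)\}/\epsilon$.

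Next I would apply \eqref{linear} with $P_1=P^*_{\epsilon,\lambda}$, which is legitimate once $\epsilon$ is small enough that $P^*_{\epsilon,\lambda}$ lies in the neighborhood of $P$ on which pathwise differentiability holds uniformly; since $P_{\epsilon,\lambda}\rightarrow P$ as $\epsilon\rightarrow 0$ and the projection is continuous, $P^*_{\epsilon,\lambda}\rightarrow P$, so this is harmless. This gives $\Psi(P^*_{\epsilon,\lambda})-\Psi(P)=-\int\phi^*_{\epsilon,\lambda}(u)\,dP(u)+R(P^*_{\epsilon,\lambda},P)$. The key algebraic step then exploits (A1): writing $P_{\epsilon,\lambda}=(1-\epsilon)P+\epsilon H_{x,\lambda}$, the identity $\int\phi^*_{\epsilon,\lambda}\,dP_{\epsilon,\lambda}=0$ rearranges to $-\int\phi^*_{\epsilon,\lambda}\,dP=\tfrac{\epsilon}{1-\epsilon}\int\phi^*_{\epsilon,\lambda}\,dH_{x,\lambda}$. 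Substituting and dividing by $\epsilon$ yields
\[
\frac{\Psi(P^*_{\epsilon,\lambda})-\Psi(P)}{\epsilon}=\frac{1}{1-\epsilon}\int\phi^*_{\epsilon,\lambda}(u)\,dH_{x,\lambda}(u)+\frac{R(P^*_{\epsilon,\lambda},P)}{\epsilon}\,.
\]
The same identity shows $\int\phi^*_{\epsilon,\lambda}\,dP=O(\epsilon)$, so I would decompose $\int\phi^*_{\epsilon,\lambda}\,dH_{x,\lambda}=\int\phi^*_{\epsilon,\lambda}\,d(H_{x,\lambda}-P)+\int\phi^*_{\epsilon,\lambda}\,dP$ and let $\epsilon\rightarrow 0$: the factor $1/(1-\epsilon)\rightarrow 1$ and the $\int\phi^*_{\epsilon,\lambda}\,dP$ piece vanishes. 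Taking $\lambda\rightarrow 0$ afterward, condition (A2) identifies the surviving term $\int\phi^*_{\epsilon,\lambda}\,d(H_{x,\lambda}-P)$ with $\phi_P(x)$ and condition (A3) kills $R(P^*_{\epsilon,\lambda},P)/\epsilon$, which establishes \eqref{gensec}.

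I expect the main difficulty to lie not in any single inequality but in the bookkeeping of the iterated limits: the displayed decomposition is exact for each fixed $(\epsilon,\lambda)$, but passing to the limit requires that $\int\phi^*_{\epsilon,\lambda}\,dH_{x,\lambda}$ remain bounded as $\epsilon\rightarrow 0$ (so that $\tfrac{\epsilon}{1-\epsilon}\int\phi^*_{\epsilon,\lambda}\,dH_{x,\lambda}$ genuinely vanishes) and that the inner $\epsilon$-limits exist for each $\lambda$ before the outer $\lambda$-limit is taken — precisely the structure packaged into (A2) and (A3). The conceptual crux, and the reason the Kullback--Leibler projection is the correct device, is that it is exactly this projection that renders (A1) plausible: the first-order optimality condition for the maximizer in \eqref{proj} forces every score of the fitted submodel at $P^*_{\epsilon,\lambda}$, and hence by closure its canonical gradient $\phi^*_{\epsilon,\lambda}$, to integrate to zero against $P_{\epsilon,\lambda}$. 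Once this orthogonality is granted, the remainder of the argument is a consequence of the linearization \eqref{linear} together with careful management of the two limits.
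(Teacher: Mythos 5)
Your proposal is correct and takes essentially the same route as the paper's proof: substitute $P_1=P^*_{\epsilon,\lambda}$ into \eqref{linear}, use (A1) to convert $-\int \phi^*_{\epsilon,\lambda}(u)\,dP(u)$ into a term involving $H_{x,\lambda}$, divide by $\epsilon$, and invoke (A2) and (A3), with your observation that $P^*_{0,\lambda}=P$ making explicit the equivalence of \eqref{gender} and \eqref{gensec} that the paper leaves implicit. The only difference is algebraic bookkeeping: the paper adds and subtracts $\int \phi^*_{\epsilon,\lambda}(u)\,dP_{\epsilon,\lambda}(u)$ so that the difference quotient equals exactly $\int \phi^*_{\epsilon,\lambda}(u)\,d(H_{x,\lambda}-P)(u)+R(P^*_{\epsilon,\lambda},P)/\epsilon$ with no $1/(1-\epsilon)$ factor, which sidesteps the boundedness of $\int \phi^*_{\epsilon,\lambda}(u)\,dH_{x,\lambda}(u)$ that you flag; in any case that worry is resolvable from your own ingredients, since (A1) gives $\int \phi^*_{\epsilon,\lambda}(u)\,dP(u)=-\epsilon\int \phi^*_{\epsilon,\lambda}(u)\,d(H_{x,\lambda}-P)(u)$ and the existence of the inner limit in (A2) bounds the right-hand integral as $\epsilon\rightarrow 0$.
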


\begin{proof}

Setting $P_1=P^*_{\epsilon,\lambda}$ in \eqref{linear}, we note that \begin{align*}
\Psi(P^*_{\epsilon,\lambda})-\Psi(P)\ &=\ -\int \phi^*_{\epsilon,\lambda}(u)dP(u)+R(P^*_{\epsilon,\lambda},P)\\
&=\ \int \phi^*_{\epsilon,\lambda}(u)d(P_{\epsilon,\lambda}-P)(u)-\int \phi^*_{\epsilon,\lambda}(u)dP_{\epsilon,\lambda}(u)+R(P^*_{\epsilon,\lambda},P)\ .
\end{align*} In view of (A1), we have that $
\Psi(P^*_{\epsilon,\lambda})-\Psi(P)=\int \phi^*_{\epsilon,\lambda}(u)d(P_{\epsilon,\lambda}-P)(u)+R(P^*_{\epsilon,\lambda},P)$ and since $P_{\epsilon,\lambda}-P=\epsilon(H_{x,\lambda}-P)$, we find that \begin{equation*}
\frac{\Psi(P^*_{\epsilon,\lambda})-\Psi(P)}{\epsilon}\ =\ \int \phi^*_{\epsilon,\lambda}(u)d(H_{x,\lambda}-P)(u)+\frac{R(P^*_{\epsilon,\lambda},P)}{\epsilon}\ .
\end{equation*} The result follows directly from (A2) and (A3).

\end{proof}

Condition (A1) drives in large part our generalization of the procedures of \cite{frangakis2015biometrics} and \cite{luedtke2015biometrics} to arbitratry models. Projecting the path $\{P_{\epsilon,\lambda}:0\leq\epsilon\leq 1\}$ into $\mathscr{M}$ to obtain $\{P^*_{\epsilon,\lambda}:0\leq \epsilon\leq 1\}$ is expected to ensure that the score-like equation described in (A1) is solved. In fact, as we will see in the next section, mild regularity conditions ensure that (A1) is satisfied. Condition (A2) imposes relatively weak continuity requirements on the EIF. Since $R$ is a second-order term, $R(P_{\epsilon,\lambda},P)$ is generally of order $O(\epsilon^2)$ for each fixed $\lambda>0$. Condition (A3) requires that $R(P^*_{\epsilon,\lambda},P)$ be of order $o(\epsilon)$ for $\lambda$ small and $\epsilon$ sufficiently smaller. Determining how the projection step and the smoothing parameter $\lambda>0$ affects the rate of this second-order remainder term is critical to establishing whether (A3) holds. This is studied in detail in the next section.

If the projection $P^*_{\epsilon,\lambda}$ is available in closed form, \eqref{gender} suggests that we can calculate  $\phi_P(x)$ by analytically computing the pathwise derivative of $\epsilon\mapsto\Psi(P^*_{\epsilon,\lambda})$ at $\epsilon=0$ and evaluating it at some small value of $\lambda>0$. If $P^*_{\epsilon,\lambda}$ is not available in closed form or the mapping $\epsilon\mapsto\Psi(P^*_{\epsilon,\lambda})$ is difficult to differentiate analytically, \eqref{gensec} suggests using the secant line slope \[\frac{\Psi(P^*_{\epsilon,\lambda})-\Psi(P)}{\epsilon}\] for small $\lambda$ and even smaller $\epsilon$ as an approximation to $\phi_P(x)$. Strategies for appropriately selecting values of $\epsilon$ and $\lambda$ are discussed in Section \ref{practice}, whereas the sensitivity of the approximation to these choices will be studied in the context of two examples in Section \ref{illustrations}.

Much of the effort required to use representations \eqref{gender} and \eqref{gensec} goes into identifying the projection $P^*_{\epsilon,\lambda}$ of $P_{\epsilon,\lambda}$ onto the model space. For this task, the equivalence between minimization of the Kullback-Leibler divergence and maximization of the likelihood is often useful and can be leveraged. In many cases, this projection can be identified analytically. In many others, a numerical approach must be taken. Regardless, the definition of $P^*_{\epsilon,\lambda}$ does not involve the parameter of interest. Hence, the more challenging portion of the approach is exclusively model-specific, and once it has been successfully tackled, the resulting projection can be used for any parameter a practitioner may wish to study. This contrasts sharply with the conventional approach to deriving the EIF, wherein the statistician must first derive an influence function, characterize the tangent space of the model, and finally project the influence function onto this tangent space. In this conventional approach, both the parameter-specific task -- finding an influence function -- and the model-specific task -- studying the tangent space and how to project onto it -- require specialized knowledge. Performing these tasks for a given parameter and model combination does not automatically provide an easy way of tackling any other parameter, in contrast to the approach that we propose.

\section{Verification of technical conditions}\label{theory}

The validity of representations \eqref{gender} and \eqref{gensec} is guaranteed to hold under the high-level technical conditions (A1), (A2) and (A3). We now identify lower-level sufficient conditions under which (A1), (A2) and (A3), and thus also Theorem \ref{representation}, hold.

In the developments below, we let \begin{align*}
r(\lambda)\ :=\ \left\|\frac{dH_{x,\lambda}}{dP}\right\|_{2,P}\ =\ \sqrt{\int \left\{\frac{dH_{x,\lambda}}{dP}(u)\right\}^2dP(u)}
\end{align*} denote the $L_2(P)$-norm of the Radom-Nykodim derivative of $H_{x,\lambda}$  relative to $P$. This derivative is defined for each $\lambda>0$ since $H_{x,\lambda}$ is dominated by $P$ by construction. Whenever $P$ does not assign positive mass to the set $\{x\}$, the value of $r(\lambda)$ will usually tend to infinity as $\lambda$ tends to zero. The rate at which this occurs will be critical in our study of the technical conditions listed in Theorem \ref{representation}. Here and throughout, given a function $h$, we define $\|h\|_{2,P}:=\sqrt{\int h(u)^2dP(u)}$ and $\|h\|_{\infty,A}:=\sup_{u\in A}|h(u)|$ for any set $A$. We also denote by $\mathscr{S}_{x,\lambda}$ the support of $H_{x,\lambda}$.

\subsection{Solution of the EIF estimating equation}

By virtue of being a projection, $P^*_{\epsilon,\lambda}$ is expected to solve a collection of score-like equations, including that exhibited in condition (A1). The following theorem establishes formal regularity conditions validating this heuristic argument.

\begin{theorem}\label{A1}
Condition (A1) holds provided either of the following conditions is true:\begin{enumerate}[\ \ \ (a)]
\item for some parametric submodel $\mathscr{M}_0:=\{P_\gamma:\gamma\in\Gamma\}\subseteq \mathscr{M}$ where $\Gamma\subseteq \mathbb{R}$ is an interval containing zero and $P_{\gamma=0}=P^*_{\epsilon,\lambda}$,  the function $u\mapsto \phi^*_{\epsilon,\lambda}(u)$ is the score for $\gamma$ at $\gamma=0$ in $\mathscr{M}_0$; \vspace{-.05in}
\item the Radon-Nikodym derivative of $P_{\epsilon,\lambda}$ relative to $P^*_{\epsilon,\lambda}$ is uniformly bounded in $L_2(P^*_{\epsilon,\lambda})$-norm.
\end{enumerate}
\end{theorem}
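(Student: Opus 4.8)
The plan is to base both parts on a single common ingredient: the first-order stationarity condition satisfied by the Kullback--Leibler projection $P^*_{\epsilon,\lambda}$. Since $P^*_{\epsilon,\lambda}$ maximizes $P_1\mapsto \int \log\{(dP_1/d\nu)(u)\}\,dP_{\epsilon,\lambda}(u)$ over $\mathscr{M}(P)$, I would take any regular one-dimensional parametric submodel $\{P_\gamma:\gamma\in\Gamma\}\subseteq\mathscr{M}(P)$ passing through $P^*_{\epsilon,\lambda}$ at $\gamma=0$, with score $s$, and differentiate the objective $\gamma\mapsto \int\log\{(dP_\gamma/d\nu)(u)\}\,dP_{\epsilon,\lambda}(u)$ at $\gamma=0$. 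Because $\Gamma$ contains zero in its interior, the perturbation is two-sided, so the first-order condition holds with equality; interchanging differentiation and integration then yields the stationarity identity $\int s(u)\,dP_{\epsilon,\lambda}(u)=0$. By linearity of the integral, this identity extends from individual scores to their entire linear span.

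For part (a), the conclusion is then immediate: if $\phi^*_{\epsilon,\lambda}$ is itself the score $s$ of some submodel $\mathscr{M}_0\subseteq\mathscr{M}$ through $P^*_{\epsilon,\lambda}$ -- and, since $P^*_{\epsilon,\lambda}\ll P$, the multiplicative realization of this submodel remains dominated by $P$ for $\gamma$ near zero and hence lies in $\mathscr{M}(P)$ -- then the stationarity identity applied to $s=\phi^*_{\epsilon,\lambda}$ reads $\int\phi^*_{\epsilon,\lambda}(u)\,dP_{\epsilon,\lambda}(u)=0$, which is exactly (A1).

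For part (b), I would exploit that $\phi^*_{\epsilon,\lambda}$, being the canonical gradient, lies in the tangent space $T_\mathscr{M}(P^*_{\epsilon,\lambda})$, that is, in the $L_2(P^*_{\epsilon,\lambda})$-closure of the linear span of scores. I would choose a sequence $g_k$ in that linear span with $\|g_k-\phi^*_{\epsilon,\lambda}\|_{2,P^*_{\epsilon,\lambda}}\to 0$, for each of which the stationarity identity gives $\int g_k(u)\,dP_{\epsilon,\lambda}(u)=0$. Writing the target integral as an $L_2(P^*_{\epsilon,\lambda})$ inner product against the Radon--Nikodym derivative $dP_{\epsilon,\lambda}/dP^*_{\epsilon,\lambda}$ (well defined under (b)) and subtracting the vanishing term $\int g_k\,dP_{\epsilon,\lambda}$, I would bound the result by Cauchy--Schwarz:
\[
\left|\int \phi^*_{\epsilon,\lambda}(u)\,dP_{\epsilon,\lambda}(u)\right|\ =\ \left|\int\left(\phi^*_{\epsilon,\lambda}-g_k\right)(u)\,\frac{dP_{\epsilon,\lambda}}{dP^*_{\epsilon,\lambda}}(u)\,dP^*_{\epsilon,\lambda}(u)\right|\ \leq\ \left\|\phi^*_{\epsilon,\lambda}-g_k\right\|_{2,P^*_{\epsilon,\lambda}}\left\|\frac{dP_{\epsilon,\lambda}}{dP^*_{\epsilon,\lambda}}\right\|_{2,P^*_{\epsilon,\lambda}}.
\]
Under hypothesis (b) the second factor is uniformly bounded, while the first tends to zero as $k\to\infty$; letting $k\to\infty$ forces the left-hand side to vanish, establishing (A1).

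The main obstacle I anticipate lies in rigorously justifying the stationarity identity: one must verify that differentiation may be passed under the integral sign in $\gamma\mapsto\int\log\{(dP_\gamma/d\nu)(u)\}\,dP_{\epsilon,\lambda}(u)$ (a dominated-convergence argument exploiting square-integrability of the score and regularity of the submodel) and that the two-sided variation genuinely yields an equality rather than a one-sided inequality at the maximizer. A secondary subtlety, relevant only to part (b), is reconciling the tangent space $T_\mathscr{M}(P^*_{\epsilon,\lambda})$ -- whose defining scores arise from submodels of $\mathscr{M}$ -- with the stationarity identity, which a priori pertains to submodels contained in $\mathscr{M}(P)$. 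This is resolved by noting that $P^*_{\epsilon,\lambda}\ll P$ makes the relevant dominated submodels through $P^*_{\epsilon,\lambda}$ lie in $\mathscr{M}(P)$ near $\gamma=0$, so that the two collections of scores generate the same $L_2(P^*_{\epsilon,\lambda})$-closure and the limiting argument is legitimate.
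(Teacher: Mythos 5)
Your proposal is correct and follows essentially the same route as the paper's proof: part (a) via the score property of $\phi^*_{\epsilon,\lambda}$ combined with first-order stationarity of the Kullback--Leibler projection, and part (b) by approximating $\phi^*_{\epsilon,\lambda}$ in $L_2(P^*_{\epsilon,\lambda})$ by elements of the (span of) scores, each annihilated by $P_{\epsilon,\lambda}$, and closing with the identical Cauchy--Schwarz bound $\|\phi^*_{\epsilon,\lambda}-g_k\|_{2,P^*_{\epsilon,\lambda}}\|dP_{\epsilon,\lambda}/dP^*_{\epsilon,\lambda}\|_{2,P^*_{\epsilon,\lambda}}$. The only difference is that you spell out the stationarity identity $\int s(u)\,dP_{\epsilon,\lambda}(u)=0$ and the domination subtlety, which the paper simply asserts; this is a refinement of, not a departure from, the paper's argument.
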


The tangent space of $\mathscr{M}$ at $P^*_{\epsilon,\lambda}$ is the collection of scores and elements that can be approximated arbitrarily well by a linear combination of scores. Under condition (a) in the above theorem, the result is established automatically since then $\phi^*_{\epsilon,\lambda}$ is itself a score. Condition (b) is a relatively milder condition. It is expected to hold in some generality since $P^*_{\epsilon,\lambda}=P_{\epsilon,\lambda}$ for $\epsilon=0$ and any $\lambda>0$, and as such, the Random-Nykodim derivative of  $P_{\epsilon,\lambda}$ relative to $P^*_{\epsilon,\lambda}$ equals one at $\epsilon=0$. Under reasonable continuity, in any small neighborhood of $\epsilon$ values near zero, this derivative is expected to be bounded in $L_2(P^*_{\epsilon,\lambda})$-norm. Additionally, any region supported by $P_{\epsilon,\lambda}$ and in which $P^*_{\epsilon,\lambda}$ assigns negligible probability mass makes a large negative contribution to the log-likelihood criterion in \eqref{proj}, thereby thwarting the objective of maximizing the likelihood. This observation further supports the plausibility of condition (b), and in fact guarantees it in the context of any finitely-supported $P_{\epsilon,\lambda}$.

\subsection{Continuity of the EIF}

We relied on certain notions of continuity to establish the validity of representations \eqref{gender} and \eqref{gensec}. The theorem below highlights how the continuity requirement stated in condition (A2) can be more concretely verified.

\begin{theorem}\label{A2}
Suppose that $\lim_{\lambda\rightarrow 0}\int \phi_P(u)dH_{x,\lambda}(u)=\phi_P(x)$ and $\lim_{\lambda\rightarrow 0}\lim_{\epsilon\rightarrow 0}\int \phi^*_{\epsilon,\lambda}(u)dP(u)=0$. Condition (A2) holds provided either \begin{center}(a) $\lim_{\lambda\rightarrow 0}\lim_{\epsilon\rightarrow 0}\|\phi^*_{\epsilon,\lambda}-\phi_P\|_{\infty,\mathscr{S}_{x,\lambda}}=0$\ \ or\ \ (b) $\lim_{\lambda\rightarrow 0}\lim_{\epsilon\rightarrow 0}r(\lambda)\|\phi^*_{\epsilon,\lambda}-\phi_P\|_{2,P}=0$.\end{center}
\end{theorem}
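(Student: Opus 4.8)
The plan is to reduce condition (A2) to the two standing hypotheses by isolating the discrepancy between $\phi^*_{\epsilon,\lambda}$ and $\phi_P$ and controlling it via either a uniform or an $L_2$ bound. First I would rewrite the integral appearing in (A2) by adding and subtracting $\phi_P$ inside the $H_{x,\lambda}$-integral, obtaining the decomposition
\[\int \phi^*_{\epsilon,\lambda}\,d(H_{x,\lambda}-P) = \int \phi_P\,dH_{x,\lambda} + \int (\phi^*_{\epsilon,\lambda}-\phi_P)\,dH_{x,\lambda} - \int \phi^*_{\epsilon,\lambda}\,dP.\]
The first term on the right does not depend on $\epsilon$, so applying $\lim_{\epsilon\rightarrow 0}$ leaves it unchanged, and the first standing hypothesis then gives $\lim_{\lambda\rightarrow 0}\int \phi_P(u)\,dH_{x,\lambda}(u)=\phi_P(x)$. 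The last term vanishes in the iterated limit by the second standing hypothesis. Thus the entire argument reduces to showing that the middle term, which measures the error incurred by replacing the exact EIF $\phi_P$ by its approximation $\phi^*_{\epsilon,\lambda}$, tends to zero in the iterated limit.

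To handle the middle term under condition (a), I would bound it by $\int |\phi^*_{\epsilon,\lambda}(u)-\phi_P(u)|\,dH_{x,\lambda}(u)$ and note that, because $H_{x,\lambda}$ is a probability measure supported on $\mathscr{S}_{x,\lambda}$, this is at most $\|\phi^*_{\epsilon,\lambda}-\phi_P\|_{\infty,\mathscr{S}_{x,\lambda}}$, which vanishes in the iterated limit by (a). To handle it under condition (b), I would instead express the integral as one against $dP$ through the Radon-Nikodym derivative $dH_{x,\lambda}/dP$, which is well-defined since $H_{x,\lambda}\ll P$ by construction, and then apply the Cauchy-Schwarz inequality in $L_2(P)$ to obtain the bound $\|\phi^*_{\epsilon,\lambda}-\phi_P\|_{2,P}\cdot r(\lambda)$, recognizing the second factor as precisely $r(\lambda)=\|dH_{x,\lambda}/dP\|_{2,P}$. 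This vanishes in the iterated limit by (b). Combining the three terms then yields condition (A2).

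The argument is elementary once the decomposition is in place, and I do not anticipate a serious analytic obstacle. The only point requiring mild care is the bookkeeping of the iterated limit: the decomposition must be arranged so that the $\epsilon$-independent term is separated out before $\lim_{\epsilon\rightarrow 0}$ is taken, and the bounds in (a) and (b) must be applied uniformly in $\epsilon$ prior to passing to the iterated limit, so that the convergence hypotheses in (a) and (b) can be invoked directly on the resulting sup- and product-bounds. The substantive content of the theorem therefore lies not in the proof itself but in recognizing that (a) and (b) offer two natural and checkable routes for verifying the abstract requirement (A2) --- one through a local uniform bound over the shrinking support $\mathscr{S}_{x,\lambda}$, the other through a global $L_2$ bound traded against the blow-up rate $r(\lambda)$ of the smoothing kernel.
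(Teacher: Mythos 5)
Your proposal is correct and follows essentially the same argument as the paper's proof: the identical add-and-subtract decomposition into the three terms $\int(\phi^*_{\epsilon,\lambda}-\phi_P)\,dH_{x,\lambda}$, $\int\phi_P\,dH_{x,\lambda}-\phi_P(x)$, and $\int\phi^*_{\epsilon,\lambda}\,dP$, with the two standing hypotheses disposing of the latter two. Your treatment of the remaining term --- the sup-norm bound over $\mathscr{S}_{x,\lambda}$ under (a), and the Radon--Nikodym rewrite plus Cauchy--Schwarz giving $r(\lambda)\|\phi^*_{\epsilon,\lambda}-\phi_P\|_{2,P}$ under (b) --- is exactly the paper's.
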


The requirement that $\int \phi_P(u)dH_{x,\lambda}(u)$ approximates $\phi_P(x)$ as $\lambda$ tends to zero simply stipulates that averaging $\phi_P$ with respect to a distribution eventually concentrating all its probability mass on $\{x\}$ should approximately yield $\phi_P(x)$. Furthermore, this theorem requires that $\int \phi^*_{\epsilon,\lambda}(u)dP(u)$ tends to zero, which is reasonable under some continuity since $\phi^*_{\epsilon,\lambda}$ tends to $\phi_P$ and $\int \phi_P(u)dP(u)=0$.  Beyond this, in order for condition (A2) to hold, it suffices either for  $\phi^*_{\epsilon,\lambda}$ to approximate $\phi_P$ in supremum norm over the support of $H_{x,\lambda}$ or in $L_2(P)$-norm at a rate faster than $r(\lambda)^{-1}$. These statements each hinge on a certain notion of continuity that appears needed whenever $P$ is not finitely-supported and nearly degenerate distributions must be used in defining the linear perturbation paths.

\subsection{Preservation of the rate of convergence}\label{rates}

The proof of representations \eqref{gender} and \eqref{gensec} hinges upon a linearization of the difference between $\Psi(P^*_{\epsilon,\lambda})$ and $\Psi(P)$. To ignore the remainder term from this linearization, we require that $R(P^*_{\epsilon,\lambda},P)/\epsilon$ be arbitrarily small for small enough $\lambda$ and sufficiently smaller $\epsilon$. The following theorem establishes a bound on $R(P^*_{\epsilon,\lambda},P)$ in terms of $\epsilon$ and $\lambda$ under mild conditions. It also clarifies how $\epsilon$ and $\lambda$ must be chosen to guarantee condition (A3).

\begin{theorem}\label{A3}
Suppose that there exists an interval $I_0=[m_0,m_1]\subset (0,+\infty)$ such that for each small $\lambda$ the Radon-Nikodym derivative of $P^*_{\epsilon,\lambda}$ relative to $P$ is uniformly contained in $I_0$ over the support of $P$ for sufficiently small $\epsilon$. Suppose also that there exist some $0<C<+\infty$ such that for any $P_1\in\mathscr{M}$ with Radon-Nikodym relative to $P$ bounded above by $m_1$ over the support of $P$ we have that \[|R(P_1,P)|\ \leq\  C\left\|\frac{dP_1}{dP}-1\right\|^2_{2,P}\ .\] Then, it is true that $R(P^*_{\epsilon,\lambda},P)/[\epsilon \{1+r(\lambda)\}]^2$ is bounded for small $\lambda$ and sufficiently smaller $\epsilon$. Thus, condition (A3) holds if $\epsilon=\epsilon(\lambda)$ is selected such that $\epsilon(\lambda)\{1+r(\lambda)\}^2\rightarrow 0$ as $\lambda$ tends to zero.
\end{theorem}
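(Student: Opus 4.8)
The plan is to reduce the assertion to a quadratic control on the $L_2(P)$-norm of the density-ratio perturbation $dP^*_{\epsilon,\lambda}/dP-1$, and then to extract that control from the optimality that defines the Kullback--Leibler projection in \eqref{proj}. By the first hypothesis, for small $\lambda$ and sufficiently smaller $\epsilon$ the ratio $dP^*_{\epsilon,\lambda}/dP$ is bounded above by $m_1$ on the support of $P$, so $P^*_{\epsilon,\lambda}$ is eligible for the second hypothesis and
\[
\bigl|R(P^*_{\epsilon,\lambda},P)\bigr|\ \le\ C\,\Bigl\|\frac{dP^*_{\epsilon,\lambda}}{dP}-1\Bigr\|^2_{2,P}\ .
\]
It therefore suffices to show that $\|dP^*_{\epsilon,\lambda}/dP-1\|_{2,P}$ is of order $\epsilon\{1+r(\lambda)\}$, since squaring then yields exactly the claimed boundedness of $R(P^*_{\epsilon,\lambda},P)/[\epsilon\{1+r(\lambda)\}]^2$.

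To bound that norm, I would exploit that $P$ is itself a feasible competitor in the maximization defining $P^*_{\epsilon,\lambda}$: since $P\in\mathscr{M}(P)$, optimality gives $\int\log\{dP^*_{\epsilon,\lambda}/dP\}\,dP_{\epsilon,\lambda}\ge 0$. Writing $f:=dP^*_{\epsilon,\lambda}/dP$ and $w:=dH_{x,\lambda}/dP$ and using $dP_{\epsilon,\lambda}/dP=1+\epsilon(w-1)$, this inequality rearranges to
\[
-\int\log f(u)\,dP(u)\ \le\ \epsilon\int\log f(u)\,\{w(u)-1\}\,dP(u)\ .
\]
Because $\int f\,dP=1$, the left-hand side equals the nonnegative quantity $\int\{-\log f+f-1\}\,dP$, which I would then turn into genuine quadratic control using the two-sided bound $m_0\le f\le m_1$.

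A second-order Taylor expansion of $t\mapsto-\log t+t-1$ about $t=1$ supplies a constant $c_1>0$, depending only on $m_0,m_1$, with $-\log f+f-1\ge c_1(f-1)^2$ pointwise, so the left-hand side dominates $c_1\|f-1\|^2_{2,P}$. On the right-hand side, boundedness of $f$ away from zero furnishes a constant $c_2$ with $|\log f|\le c_2|f-1|$, whence Cauchy--Schwarz together with $\|w-1\|_{2,P}=\sqrt{r(\lambda)^2-1}\le 1+r(\lambda)$ bounds it by $\epsilon\,c_2\|f-1\|_{2,P}\{1+r(\lambda)\}$. Combining the two estimates and cancelling one factor of $\|f-1\|_{2,P}$ gives $\|f-1\|_{2,P}\le(c_2/c_1)\,\epsilon\{1+r(\lambda)\}$, exactly the rate needed. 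Substituting back yields $|R(P^*_{\epsilon,\lambda},P)|\le C(c_2/c_1)^2\epsilon^2\{1+r(\lambda)\}^2$, so the ratio is bounded; writing $R(P^*_{\epsilon,\lambda},P)/\epsilon$ as this bounded quantity times $\epsilon\{1+r(\lambda)\}^2$ then delivers (A3) whenever $\epsilon(\lambda)\{1+r(\lambda)\}^2\to0$.

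The main obstacle, I expect, is the middle step: the projection is defined through a Kullback--Leibler criterion, whereas the remainder hypothesis is phrased in $L_2(P)$, so the crux is to pass cleanly between the two. The uniform bound $m_0\le dP^*_{\epsilon,\lambda}/dP\le m_1$ is precisely what makes the logarithm behave quadratically and keeps $\log f$ comparable to $f-1$, rendering the divergence and the squared $L_2$-distance equivalent; without it the argument would stall. A secondary point requiring care is the legitimacy of the manipulations under $P_{\epsilon,\lambda}$, but these are harmless because $H_{x,\lambda}\ll P$ forces $P_{\epsilon,\lambda}\ll P$, so every integral against $P_{\epsilon,\lambda}$ sees only the support of $P$, where $f$ is well defined and bounded.
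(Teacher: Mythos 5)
Your proposal is correct, and its skeleton coincides with the paper's own proof: both exploit that $P$ is a feasible competitor in the projection defining $P^*_{\epsilon,\lambda}$, combine this optimality with nonnegativity of the Kullback--Leibler divergence to arrive at the inequality $0\le -\int \log f(u)\,dP(u)\le \epsilon\int \log f(u)\,\{w(u)-1\}\,dP(u)$ with $f:=dP^*_{\epsilon,\lambda}/dP$ and $w:=dH_{x,\lambda}/dP$, use Cauchy--Schwarz to extract the factor $\epsilon\{1+r(\lambda)\}$, and then lean on the two-sided bound $m_0\le f\le m_1$ to pass between logarithmic and quadratic quantities before applying the remainder hypothesis. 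Where you genuinely differ is the middle passage. The paper closes the loop with a self-bounding argument in the KL-type quantity $|\int \log f(u)\,dP(u)|$: it applies Cauchy--Schwarz with $\|\log f\|_{2,P}$, invokes a cited inequality from van der Laan and Rubin (2004), namely $\int \{\log f(u)\}^2dP(u)\le M\,|\int \log f(u)\,dP(u)|$, to deduce $|\int \log f(u)\,dP(u)|\le M^2\epsilon^2\{1+r(\lambda)\}^2$, and then converts this to a bound on $\|f-1\|^2_{2,P}$ via the factorization through $\sqrt{f}$ and the inequality $\log u\le 2(\sqrt{u}-1)$. You instead run the cancellation directly in $\|f-1\|_{2,P}$, replacing both of those devices with two elementary pointwise inequalities valid on $[m_0,m_1]$: the Taylor lower bound $-\log t+t-1\ge c_1(t-1)^2$ and the Lipschitz-type bound $|\log t|\le c_2|t-1|$. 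Your route is more self-contained (no external lemma, no $\sqrt{f}$ manipulation) and makes transparent that the interval $I_0$ is precisely what renders the KL divergence and the squared $L_2(P)$ distance equivalent; the paper's route produces, as an intermediate by-product, an explicit $\epsilon^2\{1+r(\lambda)\}^2$ bound on the KL-type discrepancy between $P$ and $P^*_{\epsilon,\lambda}$, which has some independent interest. Both arguments use the hypotheses in full and deliver the same final bound and the same prescription for $\epsilon(\lambda)$.
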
 

As discussed in the previous subsection, since $P^*_{\epsilon,\lambda}=P$ for any value $\lambda>0$ whenever $\epsilon=0$, the derivative of $P^*_{\epsilon,\lambda}$ relative to $P$ is indeed expected to be uniformly bounded above and away from zero for small enough $\lambda$ and sufficiently smaller $\epsilon$. Furthermore, it is often the case that the remainder term, as being a second-order term arising from a linearization, can be bounded by the squared norm of the difference between the derivative of $P^*_{\epsilon,\lambda}$ relative to $P$ and its value at $\epsilon=0$. This inequality often follows quite easily from an application of the Cauchy-Schwartz inequality on the remainder term. It is easy to verify in common examples and generally holds under rather mild conditions.

\section{Practical considerations}\label{practice}

The representations presented in Theorem \ref{representation} provide the theoretical foundations for numerically approximating the EIF and thus for numerically constructing efficient estimators. The implementation of the approach suggested by these representations nevertheless presents specific challenges. Practical guidelines, as provided below, may facilitate the successful implementation of our proposal  by practitioners.

\subsection{Construction of the linear perturbation path}

In constructing the linear perturbation path that defines $P_{\epsilon,\lambda}$ and thus $P^*_{\epsilon,\lambda}$, the nearly degenerate distribution at $\{x\}$ is used instead of its purely degenerate counterpart because it ensures that all distributions along the perturbation path are dominated by $P$. This is required to ensure the validity of the representations we have proposed. Clearly, there is no need for smoothing in the components of the data unit for which the corresponding marginal distribution implied by $P$ is dominated by a counting measure. In fact, as we stress below, unnecessary smoothing will needlessly increase the computational burden of the approximation procedure. For components for which the corresponding marginal distribution is dominated by the Lebesgue measure, smoothing is generally needed. In practice, we suggest the use of product kernels for those components. Specifically, suppose that the data unit $X$ is $d$-dimensional and can be partitioned into $X=(X_{\textnormal{L}},X_{\textnormal{C}})$,  where $X_{\textnormal{L}}:=(X_{\textnormal{L}1},X_{\textnormal{L}2},\ldots,X_{\textnormal{L}d_1})$ and $X_{\textnormal{C}}:=(X_{\textnormal{C}1},X_{\textnormal{C}2},\ldots,X_{\textnormal{C}d_2})$ with $d_1+d_2=d$, and that the marginal distributions of $X_{\textnormal{L}}$ and $X_{\textnormal{C}}$ under $P$ are respectively dominated by the Lebesgue measure and a discrete counting measure. In this case, we can typically use the product kernel \[u\mapsto H_{x,\lambda}(u)\ :=\ \left[\prod_{j=1}^{d_1}K_{\lambda}(u_{\textnormal{L}j}-x_{\textnormal{L}j})\right]\times\left[\prod_{j=1}^{d_2}I(u_{\textnormal{C}j}=x_{\textnormal{C}j})\right]\ ,\] where $u:=(u_{\textnormal{L}},u_{\textnormal{C}})$ with $u_{\textnormal{L}}$ and $u_{\textnormal{C}}$ possible realizations of $X_{\textnormal{L}}$ and $X_{\textnormal{C}}$, respectively,  and $K_\lambda(w):=\lambda^{-1}K(\lambda^{-1}w)$ with $K$ some symmetric, absolutely continuous density function. The uniform kernel $K(w):=I(-1<2w<+1)$ is particularly appealing due to its simplicity, which translates to greater practical feasibility of our numerical approximation procedure. If the uniform kernel is used, it is easy to verify that $r(\lambda)=\lambda^{-d_1}$ provided, for example, $u_{\textnormal{L}}\mapsto p(u_{\textnormal{L}},x_\textnormal{C})$ is continuous and bounded away from zero in a neighborhood of $x_\textnormal{L}$. Thus, to ensure that condition (A3) is satisfied, Theorem \ref{A3} suggests choosing $\epsilon$ such that $\epsilon\ll \lambda^{2d_1}$. If $d_1$ is large, this requirement may be prohibitive, possibly even to the point of requiring a value of $\epsilon$ beyond the computer's default level of precision and thus requiring special computational techniques. Of course, while this guideline is sufficient, it may be overly conservative in some applications. In the next subsection, we provide a practical means of selecting the value of $\epsilon$ and $\lambda$.

As alluded to above, if we include smoothing over $X_{\textnormal{C}}$ as well in our choice of $H_{x,\lambda}$, we need $\epsilon\ll \lambda^{2d}$. This can be much more prohibitive computationally than requiring that $\epsilon\ll \lambda^{2d_1}$, particularly if $d_2$ is large. For this reason, smoothing in the construction of the linear perturbation path should be avoided for all components except those for which the corresponding marginal distribution under $P$ is absolutely continuous. Additionally, for some  parameters, smoothing can be avoided altogether for certain continuous components. As a general guideline for which supporting theory remains to be developed, we expect that no components require smoothing if the parameter is sufficiently smooth at $P\in\mathscr{M}$ in the sense that $\Psi(P_m)$ tends to $\Psi(P)$ for any sequence $\{P_m\in\mathscr{M}:m=1,2,\ldots\}$ for which the cumulative distribution of $P_m$ tends to that of $P$ uniformly as $m$ tends to infinity. Alternatively, if the MLE $P^*_n$ of $P$ based on observations $X_1,X_2,\ldots,X_n$ from $P$ is such that $\Psi(P_n^*)$ is a consistent estimator of $\Psi(P)$, no smoothing will generally be required. If, however, some regularization of the MLE is needed to ensure consistency (see, e.g., \citealp{vanderlaan1996annals}), smoothing will usually be critical.

\subsection{Selection of $\epsilon$ and $\lambda$ values}

When the pathwise derivative in \eqref{gender} can be calculated analytically, the approximation method proposed only involves the smoothing parameter $\lambda$. The supporting theory clearly suggests choosing $\lambda$ to be as small as possible. As we will illustrate in Section \ref{illustrations}, in some cases there is little sensitivity to the choice of $\lambda$ when \eqref{gender} is used, and even a relatively large value of $\lambda>0$ will yield stringent control of the approximation error.

Whenever the involved projection is not available in closed form or differentiation with respect to $\epsilon$ is too cumbersome to perform analytically, the secant line slope may be used to numerically approximate this analytic derivative. In such case,  $\epsilon$ and $\lambda$ must both be chosen, and more care is needed to ensure the reliability of the proposed procedure. The order of the limits in \eqref{gender} and \eqref{gensec} suggests that we must select a small value of $\lambda$ and even smaller value of $\epsilon$. This was made more precise in Section \ref{theory}, where it is prescribed to choose $\epsilon$ to be much smaller than $\lambda^{2d_1}$, where $d_1$ is the number of components of $P$ over which smoothing is required. While this theoretical requirement may serve as a rough guide in practice, it does not provide a concrete means of selecting values for $\epsilon$ and $\lambda$. For this purpose, it may be useful to produce a matrix representing the value of \[\frac{\Psi(P^*_{\epsilon,\lambda})-\Psi(P)}{\epsilon}\] as a function of $\epsilon$ and $\lambda$, both ranging over an exponential scale -- for example, we could consider both $\epsilon$ and $\lambda$ in the set $\{10^{-1},10^{-2},10^{-3},10^{-4},10^{-5},\ldots\}$. We refer to the resulting display as an epsilon-lambda plot. As a convention, the y-axis is used to represent $\epsilon$ values while $\lambda$ values are represented on the x-axis. Our theoretical findings  suggest that the right balance between $\epsilon$ and $\lambda$ will be achieved in a possibly curvilinear triangular region nested in the upper left portion of the epsilon-lambda plot. In this triangular region, the secant line slope should be essentially constant. One practical means of selecting $\epsilon$ and $\lambda$ would then consist of identifying this region visually by determining the quasi-triangular region in the upper left portion of the matrix over which the approximated EIF value is fixed up to a certain level of precision. As an illustration, without yet providing details regarding the specific parameter and model under consideration, we may scrutinize the epsilon-lambda plot arising in Example 1 from Section 5. This plot is provided as Figure 1 and clearly suggests that, up to three decimal points, the EIF value of interest is -0.963. This is indeed verified using theoretical calculations, as discussed in more detail in Section 5. The epsilon-lambda plot therefore may be a particularly useful tool for implementing the proposed approach for numerically approximating the EIF in practice.

\begin{figure}
\begin{center}
\hspace{-.2in}\includegraphics[width=5.7in]{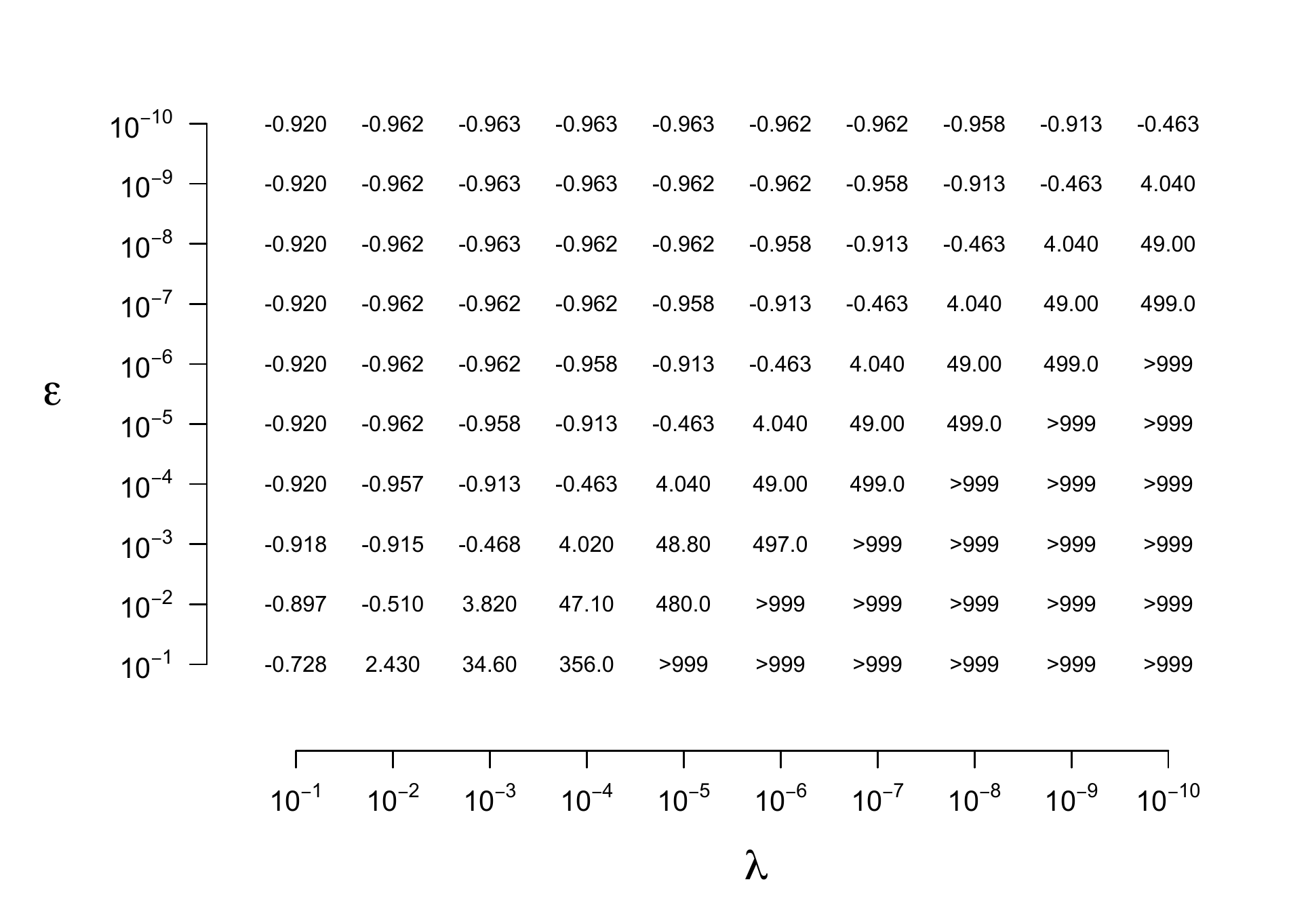}
\caption{Epsilon-lambda plot of approximated values of the EIF using a secant line slope as a function of $\epsilon$ and $\lambda$ in Example 1}
\label{elplot}
\end{center}
\end{figure}

\subsection{Numeric computation of the model space projection}

In implementing our proposal, the main challenge consists of operationalizing the optimization problem that characterizes the projection of the linear perturbation path $\{P_{\epsilon,\lambda}:0\leq \epsilon\leq 1\}$ onto the model space $\mathscr{M}$. An analytic -- or nearly analytic -- form can be found for the projection in many problems, including the illustrations provided in Section \ref{illustrations}. In other problems, the optimization problem is less analytically tractable and a numeric approach may be needed.

A general strategy for numerically approximating the required projection is to instead consider the corresponding optimization problem over $\mathscr{M}_m$, where $\mathscr{M}_1\subseteq \mathscr{M}_2\subseteq \ldots \subseteq \mathscr{M}$ is a sequence of finite-dimensional submodels of $\mathscr{M}$ such that $\cup_{m=1}^\infty \mathscr{M}_m=\mathscr{M}$.  We illustrate this in the context of families of tilted densities, though many other parametrizations are possible. We note that any distribution $Q$ dominated by $P$ can be described as a tilted form $dQ(u)=\exp\{h(u)\}dP(u)/\int \exp\{h(w)\}dP(w)$ of $P$ for some function $h\in\mathscr{H}:=\mathscr{H}(\mathscr{M})$ in a function class determined by the model $\mathscr{M}$. Here, $h$ characterizes the deviation of $Q$ from $P$. It is often easier to determine suitable approximating finite-dimensional subspaces for $\mathscr{H}$ than for $\mathscr{M}$. Suppose that $\{h_1,h_2,\ldots\}\subseteq \mathscr{H}$ forms a basis for $\mathscr{H}$, and let $\mathscr{H}_m$ denote the linear span of $\{h_1,h_2,\ldots,h_m\}$. If $P$ has density $p$ relative to $\nu$, the submodel $\mathscr{M}_m$ implied by $\mathscr{H}_m$ then consists of all distributions $Q$ with density given by \[\frac{dQ}{d\nu}(u)=\frac{\exp\big{\{}\sum_{j=1}^{m}\beta_jh_j(u)\big{\}}p(u)}{\int \exp\big{\{}\sum_{j=1}^{m}\beta_jh_j(w)\big{\}}p(w)\nu(dw)}\] for some $\underline{\beta}_m:=(\beta_1,\beta_2,\ldots,\beta_m)\in\mathbb{R}^m$. The choice $\underline{\beta}_m=(0,0,\ldots,0)$ leads to $Q=P$. Denoting by $P^*_{\epsilon,\lambda,m}$ the projection of $P_{\epsilon,\lambda}$ onto $\mathscr{M}_m$, this suggests that the corresponding optimizer $\underline{\beta}_m^*(\epsilon,\lambda)$ should be near zero for small $\epsilon$  since then $P^*_{\epsilon,\lambda}\approx P$. Thus, the search for the optimizer can be focused in a neighborhood surrounding the origin in $\mathbb{R}^m$. This simple observation can sometimes greatly accelerate the numerical optimization routine used. In practice, a sufficiently large $m$ must be selected to ensure that the resulting approximation of the projection is accurate enough to ensure the validity of the numerical evaluation of the EIF based on \eqref{gensec}. Up to an additive constant, the resulting objective function to maximize is \[\mathscr{L}(\underline{\beta}_m):=\sum_{j=1}^{m}\beta_j\int h_j(u)dP_{\epsilon,\lambda}(u)-\log\int \exp\bigg{\{}\sum_{j=1}^{m}\beta_jh_j(w)\bigg{\}}p(w)\nu(dw)\ .\] Since derivatives of $\mathscr{L}(\underline{\beta}_m)$ are easy to write down explicitly, many algorithms are available to solve this optimization problem efficiently, including Newton's method.

It may sometimes be useful to consider a stochastic version of this deterministic optimization problem. Specifically, we may generate a very large number of observations from $P_{\epsilon,\lambda}$ -- this is often easy  because $P_{\epsilon,\lambda}$ is no more than a mixture between $P$ and $H_{x,\lambda}$ -- and write the likelihood of the approximating finite-dimensional submodel based on these data. We are then faced with a standard parametric estimation problem, albeit one that may be high-dimensional. When a clever parametrization of the approximating submodel is used, it is often possible to employ standard statistical learning techniques, including regularization methods from the machine learning literature, using computationally efficient and stable off-the-shelf implementations. When adopting this approach, it appears critical to ensure that the size of the dataset generated is very large compared to the richness of the approximating submodel, since otherwise the variability resulting from this parametric estimation problem could limit our ability to achieve the required level of accuracy.

\subsection{Construction of an efficient estimator}

As emphasized earlier, knowledge of the EIF facilitates the construction of efficient estimators  in infinite-dimensional models. For example, if $\widehat{P}_n$ is a consistent estimator of $P_0\in\mathscr{M}$ based on independent draws $X_1,X_2,\ldots,X_n$ from $P_0$, the corresponding one-step Newton-Raphson estimator, defined as \begin{align*}
\psi_n^+:=\Psi(\widehat{P}_n)+\frac{1}{n}\sum_{i=1}^{n}\phi_{\widehat{P}_n}(X_i)\ ,
\end{align*} is an efficient estimator of $\psi_0$ under certain regularity conditions. The one-step approach appears to be the constructive method most amenable to an implementation based on numerical approximations of the EIF. Indeed, if the analytic form of the EIF is not known, it suffices to numerically approximate the value of $\phi_{\widehat{P}_n}(X_i)$ for each $i=1,2,\ldots,n$, rather than the entire function $u\mapsto \phi_{\widehat{P}_n}(u)$, in order to calculate $\psi_n^+$. Thus, the procedure described in this paper can be used to approximate each of these $n$ values. Nevertheless, when the projection step required to utilize the proposed representations of the EIF is computationally burdensome and the sample size $n$ is large, computing each of these values may be challenging. One need not obtain an approximation of each $\phi_{\widehat{P}_n}(X_i)$ if our objective is only to compute the one-step estimator $\psi_n^+$ -- in this case it suffices to obtain an approximation of the empirical average $\frac{1}{n}\sum_{i=1}^{n}\phi_{\widehat{P}_n}(X_i)$. This simple observation is useful because a slight modification to the representations of the EIF introduced in this paper yields a numerical procedure for approximating the required empirical average. Specifically, it is straightforward to adapt the proof of Theorem 1 to show that, under similar regularity conditions, if we define the linear perturbation $\widehat{P}_{n,\epsilon,\lambda}:=(1-\epsilon)\widehat{P}_n+\epsilon\frac{1}{n}\sum_{i=1}^{n}H_{X_i,\lambda}$ between $\widehat{P}_n$ and a uniform mixture of nearly degenerate distributions on $\{X_1\}$, $\{X_2\}$, \ldots, $\{X_n\}$, it follows that \[\frac{1}{n}\sum_{i=1}^{n}\phi_{\widehat{P}_n}(X_i)=\lim_{\lambda\rightarrow 0}\left.\frac{d}{d\epsilon}\Psi(\widehat{P}^*_{n,\epsilon,\lambda})\right|_{\epsilon=0}=\lim_{\lambda\rightarrow 0}\lim_{\epsilon\rightarrow 0}\frac{\Psi(\widehat{P}^*_{n,\epsilon,\lambda})-\Psi(\widehat{P}_n)}{\epsilon}\] with $\widehat{P}^*_{n,\epsilon,\lambda}:=\argmax_{P_1\in\mathscr{M}(P)}\int \log\left\{\frac{dP_1}{d\nu}(u)\right\}d\widehat{P}_{n,\epsilon,\lambda}(u)$. As such, a numerical approximation of the one-step estimator can be computed  in a single numerical step as \[\Psi(\widehat{P}_n)+\left.\frac{d}{d\epsilon}\Psi(\widehat{P}^*_{n,\epsilon,\lambda})\right|_{\epsilon=0}\ \approx\ \Psi(\widehat{P}_n)+\frac{\Psi(\widehat{P}^*_{n,\epsilon,\lambda})-\Psi(\widehat{P}_n)}{\epsilon}\] for appropriately selected $\epsilon$ and $\lambda$ values.

\section{Illustration and numerical studies}\label{illustrations}

To illustrate use of the representations presented above, we consider two particular examples in which the calculation of the EIF can be difficult for non-experts, whereas the approach proposed in this paper renders the problem straightforward. The technical conditions required for representations \eqref{gender} and \eqref{gensec} to hold are satisfied in these examples with the distributions selected, although we do not include details of these verifications here.

\subsection{Example 1: Average density value under known population mean}

\subsubsection{Background}

Given a distribution $P$ with Lebesgue density $p$, the average density value parameter is given by \[\Psi(P):=E_P\left\{p(X)\right\}=\int p(u)^2du\ .\] Estimation and inference for the average density value has been extensively studied in the semiparametric efficiency literature (see, e.g., \citealp{bickel1988sankhya}). We use this parameter as our first illustration becaus it is simple to describe yet requires specialized knowledge to study using conventional techniques. Suppose that $\mathscr{M}_{\textnormal{NP}}$ denotes the nonparametric model consisting of all univariate absolutely continuous distributions with finite-valued density. Suppose that $\mu\in\mathbb{R}$ is fixed and known, and denote by $\mathscr{M}\subset \mathscr{M}_{\textnormal{NP}}$ the semiparametric model consisting of all distributions in $\mathscr{M}_{\textnormal{NP}}$ with mean $\mu$. We wish to compute the EIF of $\Psi$ relative to $\mathscr{M}$ at a distribution $P\in\mathscr{M}$ evaluated at an observation value $x$.\vspace{.1in}

The EIF $\phi_{\textnormal{NP},P}$ of $\Psi$ relative to the nonparametric model $\mathscr{M}_{\textnormal{NP}}$ evaluated at $P\in\mathscr{M}$ is given by $u\mapsto \phi_{\textnormal{NP},P}(u):=2\left\{p(u)-\Psi(P)\right\}$ -- it is rather straightforward to derive this analytic form from first principles. Observing that $\mathscr{M}=\{P\in\mathscr{M}_{\textnormal{NP}}:\Theta(P)=0\}$, where $\Theta(P):=\int udP(u)-\mu$ is a pathwise differentiable parameter with EIF relative to $\mathscr{M}_{\textnormal{NP}}$ at $P\in\mathscr{M}$ given by $u\mapsto \varphi_{P}(u):=u-\mu$, Example 1 of Section 6.2 of \cite{bkrw1997} suggests that the EIF of $\Psi$ relative to $\mathscr{M}$ can be obtained as \begin{align*}
u\mapsto\phi_{P}(u)\ :=&\ \ \phi_{\textnormal{NP},P}(u)-\frac{\int \phi_{\textnormal{NP},P}(w)\varphi_P(w)dP(w)}{\int \varphi_P(w)^2dP(w)}\varphi_P(u)\\
=&\ \ 2\left\{p(u)-\Psi(P)-\frac{\int (w-\mu)p(w)dP(w)}{\int (w-\mu)^2dP(w)}(u-\mu)\right\}\ .
\end{align*} While the resulting analytic form of this EIF is relatively simple, its derivation hinges on specialized knowledge unlikely to be available to most practitioners. Use of our novel representation of the EIF provides an alternative approach that avoids the need for such knowledge, as highlighted below.

\subsubsection{Implementation and results}

To utilize our representation, we must understand how to project a given distribution $Q\in\mathscr{M}$, say with Lebesgue density $q$, into $\mathscr{M}$ relative to the Kullback-Leibler divergence. Suppose that the support of $Q$ has finite lower and upper limits $a$ and $b$, respectively, satisfying that $a<\mu<b$. An application of the method of Lagrange multipliers yields that the maximizer in $p$ of $\int \log p(u) dQ(u)$ over the class of all Lebesgue densities with mean $\mu$ is given by $q^*(u):=\{1-\xi_0(u-\mu)\}^{-1}q(u)$, where $\xi_0\in\mathbb{R}$ solves the equation \begin{equation}\int \left\{\frac{u}{ 1-(u-\mu)\xi}-\mu\right\}dQ(u)=0\end{equation} in $\xi$ and lies strictly between $(a-\mu)^{-1}$ and $(b-\mu)^{-1}$.

To compute $\phi_{P}(x)$ using the approach proposed in this paper, we must first construct the linear perturbation $P_{\epsilon,\lambda}:=(1-\epsilon)P+\epsilon H_{x\lambda}$, where $H_{x,\lambda}$ is an absolutely continuous distribution that concentrates its mass on shrinking neighborhoods of the set $\{x\}$ as $\lambda$ tends to zero. For example, we may take $H_{x,\lambda}$ to be the uniform distribution on the interval $(x-\lambda,x+\lambda)$. The projection of $P_{\epsilon,\lambda}$ onto $\mathscr{M}$ is then obtained as described in the preceding paragraph with $Q=P_{\epsilon,\lambda}$ -- as such, it has a closed-form analytic expression up to the constant $\xi_0=\xi_0(\epsilon,\lambda)$ that can be numerically solved. In the Supplementary Material, we study some properties of $\xi_0$. We may then approximate $\phi_{P}(x)$ by the secant line slope \[\phi_{P}(x)\approx \frac{\Psi(P^*_{\epsilon,\lambda})-\Psi(P)}{\epsilon}\ .\]

We evaluated this procedure numerically for a particular distribution $P$ and observation value $x$. Specifically, we took $P$ to be the Beta distribution with parameters $\alpha=3$ and $\beta=5$, and evaluated our numerical procedure for approximating the true value of $\phi_{P}(0.6)\approx -0.963$. Figure \ref{elplot_example1} provides the percent error of our numerical approximation for various combination of values for $\epsilon$ and $\lambda$. This approximation is inaccurate if either $\epsilon$ is not small enough or if $\lambda$ is too small relative to $\epsilon$. For small $\lambda$ and much smaller $\epsilon$, the secant line slope approximates the true value of $\phi_{P}(x)$ with a relative error below 0.1\%. This plot confirms what theory suggests regarding the choice of $\epsilon$ and $\lambda$. It also reaffirms the usefulness of the epsilon-lambda plot for selecting appropriate values of $\epsilon$ and $\lambda$.

\begin{figure}
\begin{center}
  \includegraphics[width=5in]{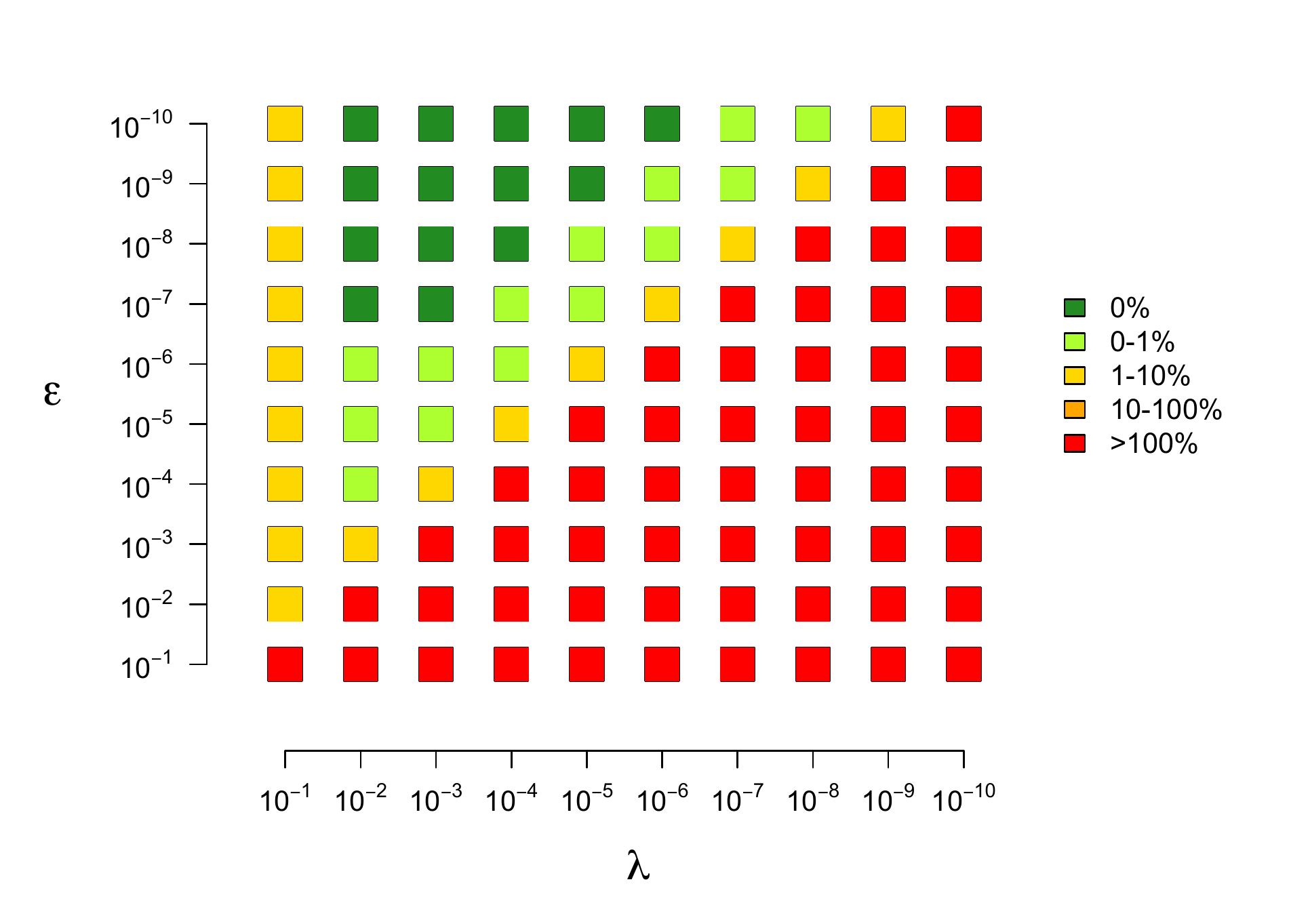}
\caption{Absolute \% error in the approximation of the EIF value using a secant line slope as a function of $\epsilon$ and $\lambda$ in Example 1}
\label{elplot_example1}
\end{center}
\end{figure}

\subsection{Example 2: G-computation parameter under Markov structure}

\subsubsection{Background}

We now consider a more complex parameter arising in the causal inference literature. Suppose that the data unit consists of the longitudinal observation $X:=(L_0,A_0,\ldots,L_{K},A_{K},L_{K+1})\sim P_0$, where $L_0,L_1,\ldots,L_{K}$ is a sequence of measurements collected at $K+1$ distinct instances through time, $L_{K+1}$ is the outcome of interest, and $A_0,A_1,\ldots,A_{K}$ are intervention indicators corresponding to each pre-outcome timepoint. For simplicity, we consider all treatment indicators to be binary. Let $\mathscr{M}_{\textnormal{NP}}$ be a nonparametric model. In practice, we may be interested in the covariate-adjusted, treatment-specific mean $\psi_0:=\Psi(P_0)$ corresponding to the intervention $(A_0,A_1,\ldots,A_K)=(1,1,\ldots,1)$. Here, for any given $P\in\mathscr{M}_{\textnormal{NP}}$, the parameter value $\Psi(P)$ is defined explicitly as $E_{P}\left[m_{0,P}(L_0)\right]$ via the G-computation recursion\begin{align*}
m_{j,P}(\overline{\ell}_j):=E_P\left[\ m_{j+1,P}(\overline{L}_{j+1})\ \middle|\ \overline{L}_{j}=\overline{\ell}_j,A_{j}=A_{j-1}=\ldots=A_0=1\ \right]
\end{align*} for $j=K,K-1,\ldots,0$, where we have set $m_{K+1,P}(\overline{L}_{K+1}):=L_{K+1}$ \citep{robins1986}. Here, for any vector $u:=(u_0,u_1,\ldots)$ we write $\overline{u}_k:=(u_0,u_1,\ldots,u_k)$. This parameter only depends on $P$ through the conditional distribution $P_{j,1}$ of $\overline{L}_{j+1}$ given $\overline{L}_{j}$ and $A_0=A_1=\ldots=A_{j}=1$ for $j=0,1,\ldots,K$, and the marginal distribution $P_{0,1}$ of $L_0$. Under certain untestable causal assumptions, $\psi_0$ corresponds to the mean of the counterfactual outcome $Y$ defined by an intervention setting all treatment nodes to one. With respect to $\mathscr{M}_{\textnormal{NP}}$, or any model with restrictions only on the conditional distribution of $A_{j}$ given $\overline{A}_{j-1}$ and $\overline{L}_{j}$ possibly for any $j\in\{0,1,\ldots,K\}$, the EIF of $\Psi$ at $P$ is known to be given by $\phi_{\textnormal{NP},P}:=\sum_{j=0}^{K+1}\phi_{j,NP,P}$, where  $\phi_{0,NP,P}(x):=m_{0,P}(\ell_0)-\Psi(P)$ and \[\phi_{j,NP,P}(x):=\frac{a_0a_1\cdots a_{j-1}}{\prod_{r=0}^{j-1}P(A_r=1\mid \overline{L}_r=\overline{\ell}_r,A_{0}=A_1=\ldots=A_{r-1}=1)}\left\{m_{j,P}(\overline{\ell}_{j})-m_{j-1,P}(\overline{\ell}_{j-1})\right\}\] for $j=1,2,\ldots,K+1$.

Let the model $\mathscr{M}$ consist of the subset of distributions $P$ in $\mathscr{M}_{\textnormal{NP}}$ such that,  for each $j=2,3,\ldots,K+1$, $L_{j}$ and $\overline{L}_{j-2}$ are independent given $L_{j-1}$ and $A_{j-1}=A_{j-2}=\ldots=A_0=1$ under $P$. For each $P\in\mathscr{M}$, we note that $m_{j,P}(\overline{\ell}_j)=m_{j,P}(\ell_j)$ for each $j$. The EIF of $\Psi$ relative to $\mathscr{M}$ at $P$  is given by $\phi_{P}:=\sum_{j=0}^{K+1}\phi_{j,P}$, where $\phi_{0,P}=\phi_{0,NP,P}$ and $\phi_{j,P}$ is defined pointwise as \begin{align*}
x\mapsto \phi_{j,P}(x)\ :=&\ \ E_P\left[\phi_{j,NP,P}(X)\mid L_{j}=\ell_{j},L_{j-1}=\ell_{j-1},\overline{A}_{j-1}=\overline{a}_{j-1}\right]\\
&\hspace{1.5in}-E_P\left[\phi_{j,NP,P}(X)\mid L_{j-1}=\ell_{j-1},\overline{A}_{j-1}=\overline{a}_{j-1}\right]\\
=&\ \ a_0a_1\cdots a_{j-1}\cdot T_j(P)(x)\cdot \{m_{j,P}(\ell_{j})-m_{j-1,P}(\ell_{j-1})\}
\end{align*} for $j=1,2,\ldots,K+1$, and we use $T_j(P)(x)$ to denote \[E_P\left[\frac{1}{\prod_{r=0}^{j-1}P(A_r=1\mid \overline{L}_r,A_{0}=A_1=\ldots=A_{r-1}=1)}\ \middle|\ L_{j}=\ell_{j},L_{j-1}=\ell_{j-1},\overline{A}_{j-1}=\overline{a}_{j-1}\right].\] Deriving this expression requires specialized knowledge and familiarity with efficiency theory for longitudinal structures. Furthermore, even given this analytic expression, the EIF may often be difficult to compute since it involves rather elaborate conditional expectations. 

\subsubsection{Implementation and results}

As in the previous example, the main challenge is to understand how to project a given distribution $Q$ into $\mathscr{M}$. Given a dominating measure $\nu$, we denote the density function of $Q$ with respect to $\nu$ as $q$. Furthermore, we denote by $q_{L_j}$ the density of the conditional distribution of $L_j$ given $\overline{L}_{j-1}$ and $\overline{A}_{j-1}$, and by $q_{A_j}$ the density of the conditional distribution of $A_j$ given $\overline{L}_{j}$ and $\overline{A}_{j-1}$. We also denote by $q_{L_j,1}$ the density $q_{L_j}$ with $\bar{a}_{j-1}=(1,1,\ldots,1)$. We use the same notational convention for any other candidate density $p$. Because for any candidate $p$ we can write \[\int \log p(u)dQ(u)=\sum_{j=0}^{K+1}\int \log p_{L_j}(\ell_j\mid \overline{\ell}_{j-1},\overline{a}_{j-1})dQ(u)+\sum_{j=0}^{K}\int \log p_{A_j}(a_j\mid \overline{\ell}_{j},\overline{a}_{j-1})dQ(u)\] and $\mathscr{M}$ can be written as a product model for the set of conditional distributions implied by the joint distribution, the required optimization problem can be performed separately for each conditional density. Because computing $\Psi(Q^*)$ does not require any component of $Q^*$ beyond $q^*_{L_j,1}$ for $j=0,1,\ldots,K+1$, we focus our attention on the corresponding optimization problems alone. Below, we denote by $\overline{q}(\ell_j,\ell_{j-1})$ the marginalized density $\iint\cdots\int q(\ell_0,1,\ell_1,1,\ldots,\ell_{j-1},1,\ell_j)\nu(d\ell_0,d\ell_1,\ldots,d\ell_{j-2})$. To find $q^*_{L_j,1}$ for $j=2,3,\ldots,K+1$, we must maximize the criterion \begin{align*}
L(p_{L_j,1})\ :=&\ \iint\cdots\int \log p_{L_j,1}(\ell_j\mid \ell_{j-1})q(\ell_0,1,\ell_1,1,\ldots,\ell_{j-1},1,\ell_j)\nu(d\ell_0,d\ell_1,\ldots,d\ell_j)\\
=&\ \iint \log p_{L_j,1}(\ell_j\mid \ell_{j-1})\overline{q}(\ell_{j-1},\ell_{j})\nu(d\ell_{j-1},d\ell_j)\\
=&\ \iint \log p_{L_j,1}(\ell_j\mid \ell_{j-1})\frac{\overline{q}(\ell_{j-1},\ell_{j})}{\int \overline{q}(\ell_{j-1},\ell_{j})\nu(d\ell_{j})}\nu(d\ell_j) \int \overline{q}(\ell_{j-1},\ell_{j})\nu(d\ell_j)\nu(d\ell_{j-1})
\end{align*} over the class of candidate conditional densities that do not depend on $\overline{\ell}_{j-2}$, here represented by $p_{L_j,1}$. Since for each fixed $\ell_{j-1}$ the mapping $\ell_j\mapsto \overline{q}(\ell_{j-1},\ell_{j})/\int \overline{q}(\ell_{j-1},\ell'_{j})\nu(d\ell'_j)$ defines a proper conditional density, by Jensen's inequality, $L(p_{L_j,1})$ is maximized by \[q^*_{L_j,1}(\ell_j\mid \ell_{j-1})=\frac{\overline{q}(\ell_{j-1},\ell_{j})}{\int \overline{q}(\ell_{j-1},\ell'_{j})\nu(d\ell'_j)}\ .\] It is easy to see that $\mathscr{M}$ constrains  neither $p_{L_1,1}$ nor $p_{L_0}$ and therefore $q^*_{L_1,1}=p_{L_1,1}$ and $q^*_{L_0}=p_{L_0}$. Thus, in the context of a longitudinal data structure, the projection of any given distribution $Q$ into a model only constrained by a Markov structure has an analytic closed-form.

As before, to compute $\phi_{P}(x)$ using the proposed representations of the EIF, we first construct the linear perturbation $P_{\epsilon,\lambda}:=(1-\epsilon)P+\epsilon H_{x,\lambda}$, where $H_{x,\lambda}$ is a distribution dominated by $P$ and concentrating its mass in shrinking neighborhoods of the set $\{x\}$ as $\lambda$ tends to zero. The projection $P^*_{\epsilon,\lambda}$ of $P_{\epsilon,\lambda}$ onto $\mathscr{M}$ has an explicit form given in the preceding paragraph with $Q=P_{\epsilon,\lambda}$. As in Example 1, we may approximate $\phi_{P}(x)$ by the secant line slope $\{\Psi(P^*_{\epsilon,\lambda})-\Psi(P)\}/\epsilon$ for small $\lambda$ and even smaller $\epsilon$. Because in this example  $P^*_{\epsilon,\lambda}$ is available in closed form, $\phi_{P}(x)$ can alternatively be approximated by $\left.\frac{d}{d\epsilon}\Psi(P^*_{\epsilon,\lambda})\right|_{\epsilon=0}$ for small $\lambda$.

For convenience, in our numerical evaluation of the EIF, we restricted our attention to a setting with $K=2$ post-baseline time-points. We considered the joint distribution $P$ of $X$ defined in terms of the following conditional distributions. The baseline covariate $L_0$ has a discrete uniform distribution on the set $\{0,1,2,3,4\}$. Given $L_0=\ell_0$, $A_0$  has a Bernoulli distribution with success probability $\expit(-1+0.5\ell_0)$. Given $A_0=a_0$ and $L_0=\ell_0$, $L_1$ has a normal distribution with mean $3\ell_0-3a_0$ and variance 4. Given $L_1=\ell_1$, $A_0=a_0$ and $L_0=\ell_0$, $A_1$ has a Bernoulli distribution with success probability $\expit\{-5+c_{10}(\ell_1)+a_0+0.5\ell_0\}$, where we define $c_{10}$ to be  the trimming function $u\mapsto -10\cdot I_{(-\infty,-10)}(u)+u\cdot I_{[-10,+10]}(u)+10\cdot I_{(+10,+\infty)}(u)$. Given $A_1=a_1$, $L_1=\ell_1$, $A_0=a_0$ and $L_0=\ell_0$, $Y$ has a Bernoulli distribution with success probability $\expit\{-1+0.5c_{10}(\ell_1)-0.5a_1-a_0\}$. We evaluated the approximations of $\phi_{P}(x)$ based on either the secant line slope or the analytic pathwise derivative at various possible values of the realized data unit  $x$. We report the absolute percent error for observation value $x:=(0,1,2,1,1)$ using the secant line slope approach in Figure \ref{elplot_example2_1color} and using the analytic derivative approach in Figure \ref{elplot_example2_2color}. The pattern observed in Figure \ref{elplot_example2_1color} is similar to that seen in Figure \ref{elplot_example1}. In a triangular region contained in the upper left portion of the epsilon-lambda plot, the approximation provided by the secant line slope is very accurate. Outside of this region, that is, for inappropriate choices of $\epsilon$ and $\lambda$, the approximation can be poor. Thankfully, the epsilon-lambda plot provides an easy way of identifying these appropriate values. From Figure \ref{elplot_example2_2color}, we note that a high level of accuracy is achieved with a relatively large $\lambda>0$. Thus, use of the analytic derivative essentially eliminates the careful selection of approximation parameters otherwise needed. Results for other observation values examined yielded similar patterns and are therefore not reported here.

\begin{figure}
\begin{center}
  \includegraphics[width=5in]{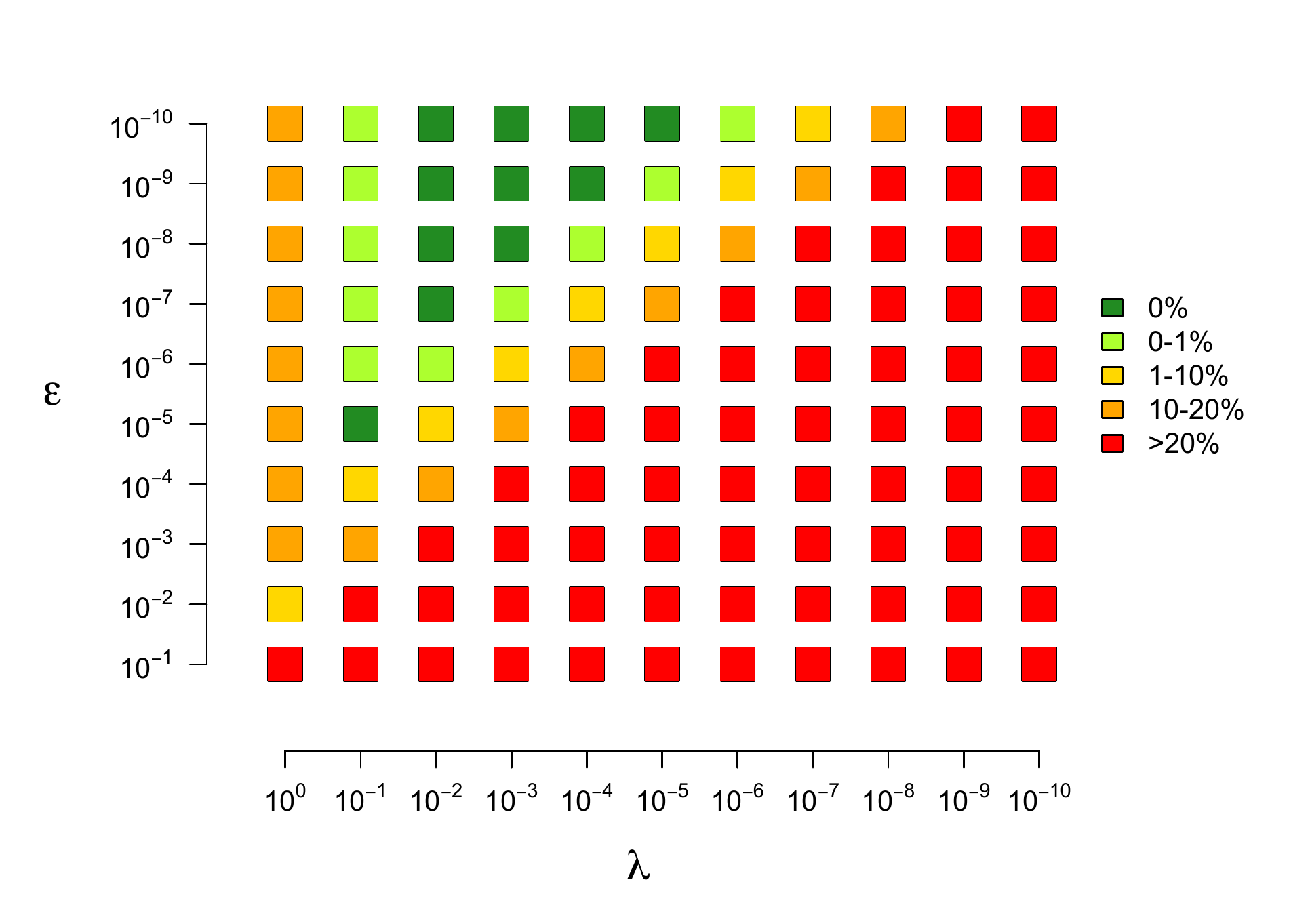}
\caption{Absolute \% error in the approximation of the EIF value using a secant line slope as a function of $\epsilon$ and $\lambda$ in Example 2}
\label{elplot_example2_1color}
\end{center}
\end{figure}

\begin{figure}
\begin{center}
  \includegraphics[width=4.75in]{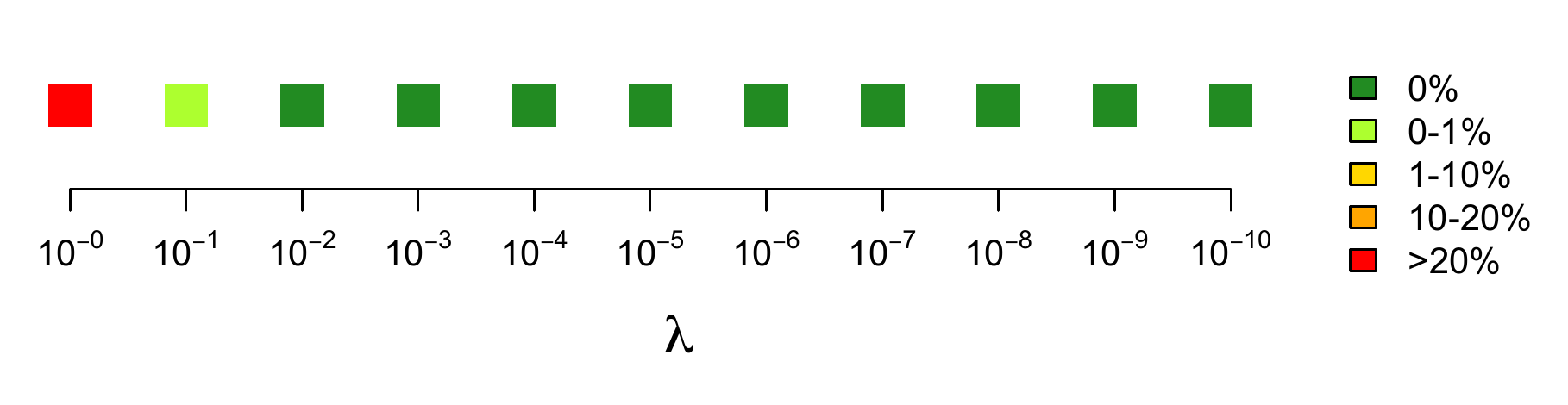}
\caption{Absolute \% error in the approximation of the EIF value using an analytic derivative as a function of $\epsilon$ and $\lambda$ in Example 1}
\label{elplot_example2_2color}
\end{center}
\end{figure}

\section{Concluding remarks}\label{conclusion}

The representations of the EIF we have presented in this paper suggest a natural strategy for numerically approximating the EIF. These representations hold in arbitrary models under mild regularity conditions. Use of these representations  requires the ability to project a given distribution into the statistical mode -- this is essentially no more than a maximum likelihood step that can be tackled by most practitioners. Most importantly, the involved work requires neither knowledge of efficiency theory nor familiarity with concepts from functional analysis or differential geometry. As such, these representations have the potential of democratizing the calculation of the EIF and thus the construction of efficient estimators in nonparametric and semiparametric models. Even for seasoned researchers in semiparametric and nonparametric theory, they provide an alternate means of tackling difficult problems, including those for which the EIF is either difficult or impossible to derive analytically.

In most problems, we anticipate the analytic work required to obtain the projection of the linear perturbation path onto the model space to be much simpler than that needed for the conventional tangent space approach. Nevertheless, this may still constitute a barrier for some practitioners. However, because the task of projecting onto the model space represents no more than an optimization problem, albeit an infinite-dimensional one, off-the-shelf computational tools may readily be used to circumvent most, if not all, analytic work otherwise required. This is particularly encouraging since strong computational skills are commonplace in statistics and data science. Furthermore, the numerical challenge will become increasingly surmountable as the capability of our computational devices continues to grow over time. It may therefore be particularly fruitful to invest additional energy into devising and studying broad numerical strategies for computerizing the calculation of the EIF based on the representations in this paper.

As with all methods that incorporate some level of automation and more readily lend themselves to use by non-specialists, there is a clear potential for misuse of the results we have presented. This appears to be an inevitable risk inherent to this type of proposal, and it equally applies to some of the most celebrated tools in current statistical practice, including the bootstrap. Deriving the EIF analytically undoubtedly remains the gold-standard approach and it should be preferred whenever possible since much information can be learned about the problem at hand from the analytic form of the EIF. In particular, verification of the regularity conditions invoked in this paper can be difficult without prior analytic knowledge of the EIF. Nevertheless,  the representations introduced in this paper have the potential of serving as an important new tool  in the arsenal of statistical researchers and practitioners alike for performing semiparametric and nonparametric analyses. Devising algorithms for verifying the required regularity conditions in any given problem is an important avenue for future research.

We have noted that a distinct advantage of the representations we have provided is that once they have been used to compute the EIF of a certain parameter in a given statistical model, the EIF of any other parameter can be obtained without any additional work since the bulk of the work required is exclusively model-specific. Nevertheless, the involved computational work must be repeated for each observation value at which we wish to evaluate the EIF. In particular, this makes it difficult to approximate the entire EIF as a function, particularly in the case of continuous or longitudinal data units. While the one-step approach only requires the EIF at the observed data points, the implementation of other efficient estimators with potentially better properties, such as targeted minimum loss-based estimators (TMLE), generally requires the entire EIF. The representations presented in this paper are therefore not conducive to a computerized implementation of TMLE. There is promise that alternative representations may be better suited for this purpose -- this is an area of active research.

\singlespacing
{\footnotesize
\vspace{.05in}
\section*{Acknowledgments} MC gratefully acknowledges the support of NIAID grant 5UM1AI068635 and the Career Development Fund of the Department of Biostatistics at the University of Washington. MvdL gratefully acknowledges the support of NIAID grant 5R01AI074345.
\vspace{.05in}
\bibliography{computerization}

}
\section*{Appendix}\vspace{.1in}

\begin{proof}[Proof of Theorem 2.]

If condition (a) holds, then the result is true because $\phi^*_{\epsilon,\lambda}$ is a score. We therefore consider the case where it does not hold. Since $\phi^*_{\epsilon,\lambda}\in T_\mathscr{M}(P^*_{\epsilon,\lambda})$, there exists a sequence of one-dimensional regular parametric submodels $\mathscr{M}_{0,m}:=\{P_{\gamma,m}:\gamma\in\Gamma_m\}\subset \mathscr{M}$ with $\Gamma_m\subset\mathbb{R}$ an interval containing zero and with score $s_m$ for $\gamma$ at $\gamma=0$, $m=1,2,\ldots$, such that \[\|\phi^*_{\epsilon,\lambda}-s_m\|_{2,P^*_{\epsilon,\lambda}}\rightarrow 0\] as $m$ tends to infinity. For each $m=1,2,\ldots$, we have that $\int s_m(u)dP_{\epsilon,\lambda}(u)=0$. Because we can write \begin{align*}
\left|\int \phi^*_{\epsilon,\lambda}(u)dP_{\epsilon,\lambda}(u)\right|\ =\ \left|\int \{\phi^*_{\epsilon,\lambda}(u)-s_m(u)\}dP_{\epsilon,\lambda}(u)\right|\ &=\ \left|\int \frac{dP_{\epsilon,\lambda}}{dP^*_{\epsilon,\lambda}}(u)\{\phi^*_{\epsilon,\lambda}(u)-s_m(u)\}dP^*_{\epsilon,\lambda}(u)\right|\\
&\leq\ \|\phi^*_{\epsilon,\lambda}-s_m\|_{2,P^*_{\epsilon,\lambda}}\left\|\frac{dP_{\epsilon,\lambda}}{dP^*_{\epsilon,\lambda}}\right\|_{2,P^*_{\epsilon,\lambda}}
\end{align*} and under condition (b), there exists some $B\in(0,+\infty)$ such that $\|dP_{\epsilon,\lambda}/dP^*_{\epsilon,\lambda}\|_{2,P^*_{\epsilon,\lambda}}<B$ for sufficiently small $\epsilon$ and sufficiently smaller $\lambda$, it must be the case that $\int \phi^*_{\epsilon,\lambda}(u)dP_{\epsilon,\lambda}(u)=0$.

\end{proof}

\begin{proof}[Proof of Theorem 3.]

We first note that \begin{align*}
&\left|\int \phi^*_{\epsilon,\lambda}(u)d(H_{x,\lambda}-P)(u)-\phi_P(x)\right|\\
&\hspace{0.25in}=\ \left|\int \{\phi^*_{\epsilon,\lambda}(u)-\phi_P(u)\}dH_{x,\lambda}(u)+\int\phi_P(u)dH_{x,\lambda}(u)-\phi_P(x)-\int \phi^*_{\epsilon,\lambda}(u)dP(u)\right|\\
&\hspace{.25in}\leq\ \left|\int \{\phi^*_{\epsilon,\lambda}(u)-\phi_P(u)\}dH_{x,\lambda}(u)\right|+\left|\int\phi_P(u)dH_{x,\lambda}(u)-\phi_P(x)\right|+\left|\int \phi^*_{\epsilon,\lambda}(u)dP(u)\right|
\end{align*} and because by assumption the second and third summands on the second line tend to zero as $\lambda$ tends to zero, it suffices to study the first summand. We can bound this term by $\|\phi^*_{\epsilon,\lambda}-\phi_P\|_{\infty,\mathscr{S}_{x,\lambda}}$ and so, if condition (a) holds, the result follows immediately. Alternatively, we can write this term as \begin{align*}
\left|\int \{\phi^*_{\epsilon,\lambda}(u)-\phi_P(u)\}dH_{x,\lambda}(u)\right|\ &=\ \left|\int \frac{dH_{x,\lambda}}{dP}(u)\{\phi^*_{\epsilon,\lambda}(u)-\phi_P(u)\}dP(u)\right|\\
&\leq\ \|\phi^*_{\epsilon,\lambda}-\phi_P\|_{2,P}\left\|\frac{dH_{x,\lambda}}{dP}\right\|_{2,P}
\end{align*} and thus, if condition (b) holds, the result is also guaranteed to hold.

\end{proof}

\begin{proof}[Proof of Theorem 4.]

Using that $P^*_{\epsilon,\lambda}$ is the maximizer of $Q\mapsto \int \log\left[\frac{dQ}{d\nu}(u)\right]dP_{\epsilon,\lambda}(u)$ over all $Q\in\mathscr{M}$, we note that \begin{align*}
0\ \geq\ \int \log\left[\frac{dP^*_{\epsilon,\lambda}}{dP}(u)\right]dP(u)\ &=\ \int \log\left[\frac{dP^*_{\epsilon,\lambda}}{dP}(u)\right]d(P-P_{\epsilon,\lambda})(u)+\int \log\left[\frac{dP^*_{\epsilon,\lambda}}{dP}(u)\right]dP_{\epsilon,\lambda}(u)\\
&\geq\ \int \log\left[\frac{dP^*_{\epsilon,\lambda}}{dP}(u)\right]d(P-P_{\epsilon,\lambda})(u)\\
&=\ \epsilon\int \log\left[\frac{dP^*_{\epsilon,\lambda}}{dP}(u)\right]d(P-H_{x,\lambda})(u)\\
&=\ \epsilon \int \log\left[\frac{dP^*_{\epsilon,\lambda}}{dP}(u)\right]\left\{1-\frac{dH_{x,\lambda}}{dP}(u)\right\}dP(u)\ .
\end{align*} Denoting for any pair $P_1\ll P_2$ the function $u\mapsto\log\left[\frac{dP_1}{dP_2}(u)\right]$ by $L(P_1,P_2)$, this implies that \begin{align*}
\left|\int L(P^*_{\epsilon,\lambda},P)(u)dP(u)\right|\ &\leq\ \epsilon\left|\int L(P^*_{\epsilon,\lambda},P)(u)\left\{1-\frac{dH_{x,\lambda}}{dP}(u)\right\}dP(u)\right|\\
&\leq\ \epsilon \left\|L(P^*_{\epsilon,\lambda},P)\right\|_{2,P}\left\|1-\frac{dH_{x,\lambda}}{dP}\right\|_{2,P}\ \leq\ \epsilon\left\{1+r(\lambda)\right\}\left\|L(P^*_{\epsilon,\lambda},P)\right\|_{2,P}\ .
\end{align*} Provided $P_1\ll P_2$, we have that $\int \{L(P_1,P_2)(u)\}^2dP_2(u)\leq M |\int L(P_1,P_2)(u) dP_2(u)|$, where $M:=M(P_1,P_2)$ depends on the supremum of the Radon-Nikodym of $P_1$ relative to $P_2$ (see, e.g., \citealp{vanderlaan2004sagmb}). This allows us to write that \[\left|\int L(P^*_{\epsilon,\lambda},P)(u)dP(u)\right|\ \leq\  M \epsilon \left\{1+r(\lambda)\right\}\left|\int L(P^*_{\epsilon,\lambda},P)(u)dP(u)\right|^{\frac{1}{2}},\] which directly implies that $|\int L(P^*_{\epsilon,\lambda})(u)dP(u)|\leq M^2\epsilon^2\{1+r(\lambda)\}^2$. If the Radon-Nikodym derivative of $P_1$ relative to $P_2$ is bounded above by $0<\zeta<+\infty$ over the support of $P_2$, we can write that \begin{align*}
0\ \leq\ \left\|\frac{dP_1}{dP_2}-1\right\|^2_{2,P_2}\ &=\ \int \left\{\frac{dP_1}{dP_2}(u)-1\right\}^2dP_2(u)\\
&=\ \int \left\{\sqrt{\frac{dP_1}{dP_2}(u)}-1\right\}^2\left\{\sqrt{\frac{dP_1}{dP_2}(u)}+1\right\}^2dP_2(u)\\
&\leq\ K(\zeta) \int \left\{\sqrt{\frac{dP_1}{dP_2}(u)}-1\right\}^2dP_2(u)\ \leq\ K(\zeta)\left|\int L(P_1,P_2)(u)dP_2(u)\right| ,
\end{align*} where $K(\zeta):=\zeta+2\sqrt{\zeta}+1$ and the last inequality is established using that $\log(u)\leq 2(\sqrt{u}-1)$ for each $u>0$. Thus, by assumption, we find that \begin{align*}
R(P^*_{\epsilon,\lambda},P)\ \leq\ C\left\|\frac{dP^*_{\epsilon,\lambda}}{dP}-1\right\|^2_{2,P}\ \leq\ CK\left|\int L(P^*_{\epsilon,\lambda},P)(u)dP(u)\right|\ \leq\ CKM^2 \epsilon^2\left\{1+r(\lambda)\right\}^2
\end{align*} for small $\lambda$ and sufficiently smaller $\epsilon$, which directly establishes the theorem.\end{proof}

\end{document}